\setlist[enumerate]{noitemsep}
\theoremstyle{plain}
\newtheorem{thm}{Theorem}[section]
\newtheorem*{thm*}{Theorem}
\newtheorem*{lemma*}{Lemma}
\newtheorem*{prop*}{Proposition}
\newtheorem*{cor*}{Corollary}
\newtheorem*{conj*}{Conjecture}
\newtheorem{proposition}[thm]{Proposition}
\theoremstyle{definition}
\newtheorem{example}[thm]{Example}
\theoremstyle{remark}
\newtheoremstyle{exampstyle}
{2pt} 
{-3pt} 
{} 
{} 
{\bfseries} 
{.} 
{.5em} 
{} 
\theoremstyle{exampstyle}
\numberwithin{equation}{section}
\theoremstyle{definition}
\theoremstyle{remark}
\newcommand{\Z}{\mathbb Z} 
\newcommand{\Q}{\mathbb Q} 
\newcommand{\R}{\mathbb R} 
\newcommand{\bq}{\mathbf q}
\newcommand{\eref}[1]{Eq.~\ref{#1}}
\newcommand{\fref}[1]{Fig.~\ref{#1}}
\begin{document}
\setlength{\intextsep}{1pt} 
\setlength{\textfloatsep}{0pt} 
\setlength{\abovecaptionskip}{6pt}
\setlength{\belowcaptionskip}{6pt}

\date{}

\title{On Smith normal forms of $q$-Varchenko matrices}
\author{Naomi Boulware, Naihuan Jing, Kailash C. Misra}
\address{Department of Mathematics, Earlham College, Richmond, IN 47374, USA}
\email{boulwna@earlham.edu} 
\address{Department of Mathematics, North Carolina State University, Raleigh, NC 27695, USA}
\email{
jing@ncsu.edu, misra@ncsu.edu}
\subjclass[]{15A21; 05E18, 52C35}
\keywords{Smith normal forms, hyperplane arrangements, Platonic solids, Varchenko matrices}
\thanks{}
\begin{abstract}
In this paper, we investigate $q$-Varchenko matrices for some hyperplane arrangements with symmetry in two and three dimensions, and prove that they have a Smith normal form over $\Z[q]$.  In particular, we examine the hyperplane arrangement for the regular $n$-gon in the plane and the dihedral model in the space and Platonic polyhedra.  In each case, we prove that the $q$-Varchenko matrix associated with the hyperplane arrangement has a Smith normal form over $\Z[q]$ and realize their congruent transformation matrices over $\Z[q]$ as well.
\end{abstract}
\maketitle
\section{Introduction}
\label{chap-zero}
Hyperplane arrangements are present in various problems in geometry, combinatorics, and algebra \cite{Gr, OT, PS, St1}. A hyperplane arrangement devides an affine space to disjoint regions. The set of hyperplanes separating two regions $R_i$ and $R_j$ is denoted by $sep(R_i, R_j)$.
In 1993, Varchenko defined a matrix associated with a hyperplane arrangement in \cite{V1} which helps to reveal the intrinsic combinatorial and algebraic properties of the hyperplane arrangement.
The Varchenko matrix has rows and columns indexed by the regions of the hyperplane arrangement, where each hyperplane is assigned an indeterminate $a_{H}$.  Then the $(i, j)$-entry of the Varchenko matrix is obtained by taking the product over all $a_{H}$ such that $H \in sep(R_{i}, \ R_{j})$.  Gao and Zhang \cite{GZ} have determined the necessary and sufficient conditions for the Varchenko matrix associated with a hyperplane arrangement to have a diagonal form. In \cite{St1}, Stanley noted that Gao and Zhang's result also holds for pseudosphere arrangements, which are a generalization of hyperplane arrangements.

$q$-Varchenko matrices come from these Varchenko matrices by replacing the $(i, j)$ entry by $q^{\#sep(R_{i}, \ R_{j})}$.  It is known \cite{DH}
 that the number of diagonal entries of the Smith normal form of the $q$- Varchenko matrix $V_q$ exactly divisible by
 $(q-1)^i$ is equal to the $i$-th Betti number of the complex complement of the arrangement. Applications of the invariant factors in the
 Smith normal form to combinatorial and discrete mathematics are discussed in  \cite{Sh}. Hyperplane arrangements also provide
interesting quantum integrable models \cite{V2}.
While the Smith normal form of a matrix is guaranteed to exist over a principal ideal domain such as $\Q[q]$,
the algorithm is not always computationally practical \cite{St2}. Combinatorially, it is important to know whether the congruent transformations can be carried out in $\Z[q]$.  In \cite{DH}, Denham and Hanlon illustrated two problems with implementation of the algorithm: one is that the size of the $q$-Varchenko matrix becomes extremely large very quickly in proportion to the number of hyperplanes in the arrangement; the second is that the degrees of the polynomial matrix entries ``blow up during the intermediate stages in the computation''.

Little is known about the existence of a Smith normal form of a matrix over rings that are not principal ideal domains.
One of the open problems is whether the hyperplane arrangement based on the root systems of type A has an integral Smith normal form \cite{St1}.

In this paper we consider an easier problem related to root systems. According to McKay \cite{Mc}, the simply-laced types of root systems are in one-to-one correspondence to the symmetry groups of
Platonic polyhedra \cite{Sl} and their degenerates such as the regular polygons and regular polyhedras. Explicitly, we will study
the related hyperplane arrangement models based on Platonic polyhedra and their degenerations. The cyclic model has been studied in a
recent paper \cite{CCM} as an example of
peelable hyperplane arrangement. In this paper we will use a different method to approach all hyperplane arrangement models based on regular
polyhedra, and show that all of the q-Varchenko matrices have the Smith normal forms over $\mathbb Z[q]$ and also the congruent transformations can be realized in $\mathbb Z[q]$ as well.

Using the symmetry of these arrangements, we give the $q$-Varchenko matrices, their Smith normal forms over $\Z[q]$, and the corresponding transition matrices.  Then, the methods used in the study of the $q$-Varchenko matrix for the octahedron arrangement are modified and applied to show that the $q$-Varchenko matrix for a hyperplane arrangement corresponding to a pyramid with a square base also has a Smith normal form over $\Z[q]$.  Lastly, we consider the hyperplane arrangement corresponding to a pyramid with a regular pentagonal base and show that its $q$-Varchenko matrix has a Smith normal form over $\Z[q]$.  In each case, we give the transition matrices and determine the Smith normal form.

The paper is organized as follows. In Section \ref{prelim} we discuss some basic notions and the hyperplane arrangements.
Next, in Section \ref{chap-two} we revisit the hyperplane arrangement of a regular $n$-gon in $\R^2$ which we call the cyclic model; we formulate an algorithm that allows us to use the symmetry of the arrangement to obtain its $q$-Varchenko matrix and determine the Smith normal form over $\Z[q]$ for an arbitrary $n$.  Then we move into $\R^3$ and define the dihedral model hyperplane arrangement; we obtain its $q$-Varchenko matrix using the symmetry of the arrangement, determine the Smith normal form over $\Z[q]$ for an arbitrary $n$, and give the transition matrices. Finally in Section \ref{poly models}, we consider the hyperplane arrangements in $\R^3$ corresponding to a tetrahedron, a cube, and an octahedron.


%

\section{Hyperplane arrangements}\label{prelim}

We first recall some basic notions about hyperplane arrangements following \cite{St2}. A real hyperplane is an $n-1$ dimensional subspace defined by a linear equation in $\mathbb R^n$.
Let $\mathcal{A} = \{h_{1}, h_{2}, \ldots, h_{k}\}$ be a set of real hyperplanes. If $\mathbb R^n\backslash \mathcal A$ is a disjoint union of open sets, then we call $\mathcal A$ a hyperplane arrangement, and the disjoint open subsets are referred as the regions of $\mathcal A$, and their collection is called the \textit{the set of regions} of $\mathcal{A}$, denoted by $\mathcal{R}(\mathcal{A})$.
The set of hyperplanes separating two regions $R, R'$ is denoted by $sep(R, \ R')$, and its cardinality $d(R,\ R') = \#sep(R, \ R')$ forms a metric on $\mathcal{R}(\mathcal{A})$.
If we fix a base region $R_{0}$, then the \textit{distance enumerator} of $\mathcal{A}$ with respect to $R_0$ is
\begin{equation*}
\displaystyle	D_{\mathcal{A}, R_0}(t) = \sum_{R \in \mathcal{R}(\mathcal{A})} t^{d(R_0, R)}.
\end{equation*}

%
Fix a base region $R_0$.  Then the regions of $\mathcal A$ form a partial order (weak order) defined by $R\preccurlyeq R'$ if $sep(R_0, R)\subseteq sep(R_0, R')$. This weak order $\mathcal R_{\mathcal A}$ is graded by distance.
The \textit{intersection poset} $L(\mathcal{A})$ is the set of all nonempty intersections of hyperplanes in $\mathcal{A}$, including the underlying space itself as the intersection over the empty set.  We have $x \le y$ in the intersection poset if $y \subseteq x$.  Therefore the underlying space is the minimal element, $\hat{0}$.

Let $x\in L(\mathcal A)$. The subarrangement $\mathcal{A}_x$ is defined as $\{h \in \mathcal{A} \ : \ x \subseteq h\}$.
The arrangement $\mathcal{A}^{x}$ is defined by $\mathcal{A}^{x} = \{x \cap h \neq \emptyset \ : \ h \in \mathcal{A} - \mathcal{A}_{x}\}.$
If $\mathcal{A} = \{h_{1}, h_{2}, \ldots, h_{n}\}$ is a hyperplane arrangement, then take $x = h_{k}$ and $\mathcal{A}^{h_{k}}$ is an arrangement in the affine subspace $h_{k}$:
\begin{equation} \label{A^{h_{k}}}
\displaystyle \mathcal{A}^{h_{k}} = \{h_{k} \cap h_{i} \neq \emptyset : h_{i} \in \mathcal{A} -\{h_{k}\}\}.
\end{equation}


Let $h_{k}$ be a hyperplane in $\mathcal{A}=\{h_{1}, h_{2}, \ldots, h_{m}\}$.  $\mathcal{A}'=\mathcal{A}-\{h_{k}\}$ is called the \textit{deleted arrangement}.  The arrangement in $h_{k}$ defined by $\mathcal{A}''=\{h_{i} \cap h_{k} : h_{i} \in \mathcal{A}'\}$ is called the \textit{restricted arrangement}.  Then the triple $(\mathcal{A}, \mathcal{A}', \mathcal{A}'')$ can be used to recursively solve the problem of counting the number of regions of $\mathcal{A}$:  the \textit{number of regions} of $\mathcal{A}$ equals the number of regions of $\mathcal{A}'$ plus the number of regions of $\mathcal{A}''$ \cite{OT}. The following result is easy to see.
\begin{proposition} \label{sep set}
	$\displaystyle sep\left(R, R'\right) = \left(sep\left(R_0, R\right) \cup sep\left(R_0, R'\right)\right) - \left(sep\left(R_0, R'\right) \cap sep\left(R_0, R\right)\right).$
\end{proposition}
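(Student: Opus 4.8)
The plan is to reduce the set identity to a hyperplane-by-hyperplane verification, since membership in each separating set is decided independently by each hyperplane. First I would observe that the right-hand side $\left(sep(R_0,R)\cup sep(R_0,R')\right)-\left(sep(R_0,R')\cap sep(R_0,R)\right)$ is precisely the symmetric difference $sep(R_0,R)\,\triangle\,sep(R_0,R')$, so the claim is equivalent to
$sep(R,R')=sep(R_0,R)\,\triangle\,sep(R_0,R')$.

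The core observation is that each hyperplane $h$ divides $\mathbb{R}^n$ into two open half-spaces, and every region lies entirely in one of them, since a region is a connected component of the complement of the arrangement and so never meets a hyperplane. I would therefore assign to each region $R$ and each $h\in\mathcal{A}$ a sign $\epsilon_h(R)\in\mathbb{Z}/2\mathbb{Z}$ recording which side of $h$ contains $R$. By the definition of the separating set, $h\in sep(R,R')$ if and only if $R$ and $R'$ lie on opposite sides of $h$, that is, $\epsilon_h(R)\neq\epsilon_h(R')$, equivalently $\epsilon_h(R)+\epsilon_h(R')=1$ in $\mathbb{Z}/2\mathbb{Z}$.

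Working in $\mathbb{Z}/2\mathbb{Z}$, I would then compute $\epsilon_h(R)+\epsilon_h(R')=\left(\epsilon_h(R)+\epsilon_h(R_0)\right)+\left(\epsilon_h(R_0)+\epsilon_h(R')\right)$, using $2\epsilon_h(R_0)=0$. Hence $h\in sep(R,R')$ if and only if exactly one of $\epsilon_h(R)+\epsilon_h(R_0)$ and $\epsilon_h(R_0)+\epsilon_h(R')$ equals $1$, i.e.\ $h$ lies in exactly one of $sep(R_0,R)$ and $sep(R_0,R')$. This is precisely the condition for $h$ to lie in the symmetric difference. Ranging over all $h\in\mathcal{A}$ yields the desired set identity.

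There is essentially no serious obstacle here, as the text flags the statement as ``easy to see.'' The only point requiring care is the geometric input that each region lies on a definite side of every hyperplane, which is what makes $\epsilon_h$ well-defined. If one prefers to avoid the $\mathbb{Z}/2\mathbb{Z}$ bookkeeping altogether, the same conclusion follows from a direct case analysis on the relative positions of $R_0,R,R'$ with respect to a fixed $h$ (all three on one side; $R_0$ alone apart; $R$ alone apart; $R'$ alone apart), checking in each of the four cases that membership of $h$ in $sep(R,R')$ agrees with its membership in the symmetric difference.
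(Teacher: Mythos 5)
Your proof is correct. The paper itself offers no argument for this proposition (it is introduced only with ``The following result is easy to see''), so there is no proof to compare against; your write-up simply supplies the standard justification. Both of your routes work: recognizing the right-hand side as the symmetric difference $sep(R_0,R)\,\triangle\,sep(R_0,R')$ and verifying membership hyperplane by hyperplane via the $\mathbb{Z}/2\mathbb{Z}$ side-indicator $\epsilon_h$ is exactly the intended reasoning, and you correctly isolate the one geometric fact that makes it go through, namely that each region, being a connected component of the complement, lies entirely on one side of every hyperplane.
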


The hyperplane arrangement of a regular $n$-gon is the hyperplane arrangement given by taking the edges of a regular $n$-gon and extending them to lines in $\R^2$.  Similarly, the hyperplane arrangement of a regular polytope is the hyperplane arrangement given by taking the sides of a regular polytope and extending them to planes in $\R^3$.
%


\section{Symmetry Models}\label{chap-two}
In this section we will develop a simple method to compute the $q$-Varchenko matrix of a hyperplane arrangement and show that the $q$-Varchenko matrix has an integral Smith normal form for the
simple example of the cyclic model. In particular, we will consider the hyperplane arrangement of the affine span of the facets of special polytopes.  

\subsection{The Cyclic Model} The Smith normal form for the $q$-Varchenko matrix associated with the hyperplane arrangement of a regular $n$-gon, $\mathcal{A} = \{h_{1}, h_{2}, \ldots, h_{n}\}$, was presented in \cite{CCM}.  Here the hyperplanes consists of all facets of the regular $n$-polygons. Our approach to finding the $q$-Varchenko matrix $V_{q}$ is different and meant to introduce a method to treat other models.  We refer to the method of arranging hyperplane arrangements of regular $n$-gons as the cyclic model $C_n$,
using the same symbol for the cyclic group of order $n$. 

Take the base region $R_0$ to be the center $n$-gon region, so $\#sep\left(R_0, R_0\right) = 0$.
The distance enumerator for $\mathcal{A}$ (with respect to $R_{0}$) is
\begin{equation} \label{p} D_{\mathcal{A}, R_0}(t) = 1+\sum_{k=1}^{p} n t^k, \ \mbox{ where } \ p = \lfloor \frac{n+1}{2} \rfloor.
\end{equation}

For positive integer $r$, the $n$-dimensional vector $\bq^r=(q^r, q^r, \ldots, q^r)$, is sometimes denoted by
$\bq_n^r$ to indicate its size if needed. We also use the following notations for the $i$-dimensional vectors:
\begin{align}
(q^r)_i&=(q^r, q^{r-2}, q^{r-4}, \ldots, q^{r-2i+2}),\\
(q^r)^i&=(q^r, q^{r+2}, q^{r+4}, \ldots, q^{r+2i-2}).
\end{align}
For $i<j$, we also define the $(n-j+i+1)$-dimensional vector:
\begin{equation}
[q^{i-j}]_{n-j+i+1}=(q^{i-j}, q^{i-j+2}, \ldots, q^{i+j-2}, \underbrace{q^{i+j}, \ldots, q^{i+j}}_{n+1-i-j}, q^{i+j-2}, \ldots, q^{i-j})
\end{equation}

We will denote the transpose of a matrix $M$ by $M^t$.

Recall that the circulant matrix $C = C(a_1, a_2, \ldots, a_n)$ is defined by:
\begin{equation}
C(a_1, a_2, \ldots, a_n)=\begin{pmatrix} a_1 & a_2 & a_3 &\cdots & a_n\\ a_n & a_1 & a_2&\cdots & a_{n-1}\\a_{n-1} & a_n & a_1&\cdots & a_{n-2}\\
\cdot & \cdot & \cdot & \cdots & \cdot \\a_2 & a_3 & a_4 &\cdots & a_{1}\end{pmatrix}.
\end{equation}

The reverse circulant matrix $RC = RC(a_1, a_2, \ldots, a_n)$ is defined by:
\begin{equation}
RC(a_1, a_2, \ldots, a_n)=\begin{pmatrix} a_1 & a_2 & a_3 &\cdots & a_n\\ a_2 & a_3 & a_4&\cdots & a_{1}\\a_{3} & a_4 & a_5&\cdots & a_{2}\\
\cdot & \cdot & \cdot & \cdots & \cdot \\a_n & a_{1} & a_{2} &\cdots & a_{n-1}\end{pmatrix}.
\end{equation}
So for any integer $1 \le a \le n$, the $(x', x)$-entry of $RC_{ij}$ is equal to the $(\overline{x'+a}, \overline{x-a})$-entry.  The overline refers to modulo $n$.

\begin{thm} \label{Cn Vq} For a natural integer $n$, let $p$ be as in \eqref{p}.
The $(1+np) \times (1+np)$ $q$-Varchenko matrix for the $C_{n}$ arrangement has the following block form:
\setlength{\arraycolsep}{6pt}
\setlength{\extrarowheight}{-1pt}
\begin{equation}\label{e:qVar}
V_q=\left(\begin{array}{cc ccc}1 & Q_1 & Q_2 & \dots & Q_p\\
	Q^{t}_{1} & C_{11} & C_{12} &  \dots & C_{1p} \\
	Q^{t}_{2} & C_{21} & C_{22}  & \dots  & C_{2p}\\
	\vdots & \vdots & \vdots & \ddots & \vdots\\
	Q^{t}_{p} & C_{p1} & C_{p2} & \dots & C_{pp}
	\end{array}\right)
\end{equation}
where $Q_{r}$ is the row vector $\bq^r_n$ 
and $C_{ij}$ ($1 \le i \le j \le p$) is the circulant matrix
given by 
\begin{equation}  \label{Cn Eij form}
	C_{i j} = C ((q^{j-i})^i, \bq^{i+j}_{n+1-i-j}, (q^{j-i+2(i-1)})_{i-1}, \bq^{j-i}_{j-i}).
 \end{equation}
\end{thm}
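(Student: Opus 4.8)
The plan is to turn the whole statement into a computation with symmetric differences of cyclic intervals. By Proposition~\ref{sep set}, for any two regions one has $sep(R,R') = sep(R_0,R)\,\triangle\,sep(R_0,R')$, so writing $S_R := sep(R_0,R)\subseteq\{h_1,\dots,h_n\}$ the distance is $d(R,R')=|S_R\,\triangle\,S_{R'}|$ and the corresponding entry of $V_q$ is $q^{|S_R\triangle S_{R'}|}$. Thus the entire matrix is determined once the sets $S_R$ are understood, and the only real work is combinatorial.

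First I would pin down the regions geometrically. Label the $n$ edge-lines $h_1,\dots,h_n$ in cyclic order and orient each so that the central $n$-gon $R_0$ lies on its positive side. A region lying far out in a direction $u$ is separated from $R_0$ by exactly those $h_\ell$ whose outward normal $\nu_\ell$ satisfies $\nu_\ell\cdot u>0$; since the normals $\nu_1,\dots,\nu_n$ are equally spaced around the circle, this index set is a run of consecutive values. Letting $u$ sweep once around then shows that the separating sets that occur are precisely the cyclic intervals of $\{h_1,\dots,h_n\}$ of length $k$ for $0\le k\le p$, each occurring exactly once and the empty interval corresponding to $R_0$; this reproduces the region count $1+np$ behind \eqref{p}. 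I can therefore label the regions at distance $k$ as $R^{(k)}_1,\dots,R^{(k)}_n$ with $S_{R^{(k)}_m}=\{h_m,h_{m+1},\dots,h_{m+k-1}\}$, indices read modulo $n$. The circulant shape is then automatic: the cyclic rotation $h_\ell\mapsto h_{\ell+1}$ carries $S_{R^{(i)}_m}$ to $S_{R^{(i)}_{m+1}}$ and preserves cardinalities of symmetric differences, so $d(R^{(i)}_m,R^{(j)}_{m'})$ depends only on $m'-m\bmod n$, which by the definition of $C(a_1,\dots,a_n)$ makes each block $C_{ij}$ circulant and determined by its first row. Likewise every region at distance $r$ is at distance $r$ from $R_0$, so $Q_r=\bq^r_n$ as claimed.

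It then remains to compute the first row of $C_{ij}$, namely the numbers $d(R^{(i)}_1,R^{(j)}_m)=i+j-2\,|A\cap B|$ for $m=1,\dots,n$, where $A=\{1,\dots,i\}$ and $B=\{m,\dots,m+j-1\}$ are arcs of lengths $i\le j$ in $\Z/n\Z$ and $i+j\le 2p\le n+1$. I would sweep $m$ from $1$ to $n$ and read off $|A\cap B|$ in four phases: for $1\le m\le i$ the arc $B$ slides off the right end of $A$, so $|A\cap B|=i-m+1$ and the exponents are $j-i+2(m-1)$, giving $(q^{j-i})^i$; for $i+1\le m\le n-j+1$ the arcs are disjoint, exponent $i+j$, giving $\bq^{i+j}_{n+1-i-j}$; for $n-j+2\le m\le n-j+i$ the arc $B$ wraps and re-enters $A$ from the left, giving the decreasing exponents $i+j-2,\dots,j-i+2$, i.e.\ $(q^{j-i+2(i-1)})_{i-1}$; and for $n-j+i+1\le m\le n$ the wrapped part already covers all of $A$, so $|A\cap B|=i$ and the exponent is constantly $j-i$, giving $\bq^{j-i}_{j-i}$. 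Concatenating the four runs produces exactly \eqref{Cn Eij form}, and their lengths sum to $i+(n+1-i-j)+(i-1)+(j-i)=n$, a built-in consistency check.

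I expect the genuine obstacle to be the geometric step, i.e.\ proving rigorously that \emph{every} separating set is a cyclic interval and that all $1+np$ intervals of length $\le p$ are realized; the symmetric-difference bookkeeping is routine once that labeling is fixed. Some care is also needed at the boundaries of the four phases: the middle phase is empty when $i+j=n+1$ (which happens for odd $n$ at $i=j=p$), the last phase is empty precisely when $i=j$ (so that $C_{ii}$ comes out symmetric, as it must), and the third phase is empty when $i=1$. Finally I would note that the theorem only records $C_{ij}$ for $i\le j$, the blocks below the diagonal being forced by $C_{ji}=C_{ij}^{t}$ since $d$ is symmetric; it is worth checking that transposing the stated circulant is consistent with swapping the roles of $i$ and $j$ in the arc computation.
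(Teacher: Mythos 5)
Your proposal is correct in substance and follows essentially the same route as the paper: both reduce each entry to the symmetric difference of the two separating sets from $R_0$ (Proposition~\ref{sep set}), both obtain the circulant structure from rotational invariance of the labelling, and your four-phase sweep of the arc $B=\{m,\dots,m+j-1\}$ past $A=\{1,\dots,i\}$ is the same computation as the paper's three cases in \eqref{Cn Eij cases}, merely indexed by $m$ rather than by the size of $A\cap B$; the resulting first row agrees with \eqref{Cn Eij form}. The one step that does not work as written is your geometric justification that every separating set is a cyclic interval: sweeping a direction $u$ at infinity only produces the \emph{unbounded} regions, whose separating sets $\{\ell:\nu_\ell\cdot u>0\}$ have length $\lfloor n/2\rfloor$ or $\lceil n/2\rceil$. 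It misses the bounded non-central regions --- for $n=5$, the five star-point triangles at distance $1$ are bounded and are not of this form --- so the shorter intervals are not realized by any $u$. You correctly flag this as the genuine obstacle; it requires a separate argument (e.g.\ induction on distance from $R_0$ along the weak order, crossing one hyperplane at a time, which is in effect what the paper's iterative labelling encodes). Once the labelling $S_{R^{(k)}_m}=\{h_m,\dots,h_{m+k-1}\}$ is granted, your arc-intersection bookkeeping, including the boundary cases you list, is complete and matches the paper's.
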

\begin{proof} By definition the Varchenko matrix encodes the intersection numbers among separable regions in the model: $V_q=(q^{\#sep(R_{i}, \ R_{j})})$, where $\#sep(R_{i}, \ R_{j})$ is the number of hyperplanes separating region $R_i$ from region $R_j$.

Now let's carefully count the separating numbers $\#sep(R_{i}, \ R_{j})$.
For $1 \le k \le p$, there are $n$ regions $R$ such that $\#sep\left(R_0, R\right) = k$. For $1 \le x \le n$:  $R_0 \le R_{x} \le R_{n+x} \le \ldots \le R_{(p-1)n + x}$ in the weak order of containment.  
Label the $1+np$ regions of the arrangement iteratively as follows. First of all, the
$n$ hyperplanes $h_1, \ldots h_n$  are named so that each $h_x$ forms an edge of the $n$-gon that shares vertices with the edges formed by $h_{\bar{x} \pm 1}$, and so that $h_2$ follows $h_1$ moving along the edges in a counter-clockwise direction; the central region is labeled by $R_0$;
label the remaining regions in the following iterative process: for each $k\leq n$ and $m\leq p-1$, choose $R'$ and name it by
$R_{(m+1)n+k}$ such that $sep(R_0, R_{m+( (k+1) mod(n))}) = sep(R_{mn+k}, R')$. Repeat this process till $k$ reaches $n$. Here and throughout this section $\bar{i}$ denotes $(i \, {\rm mod} \, n)$.

%
\begin{figure}
\includegraphics[scale=.7, trim={0in .02in 0in .02in},clip]{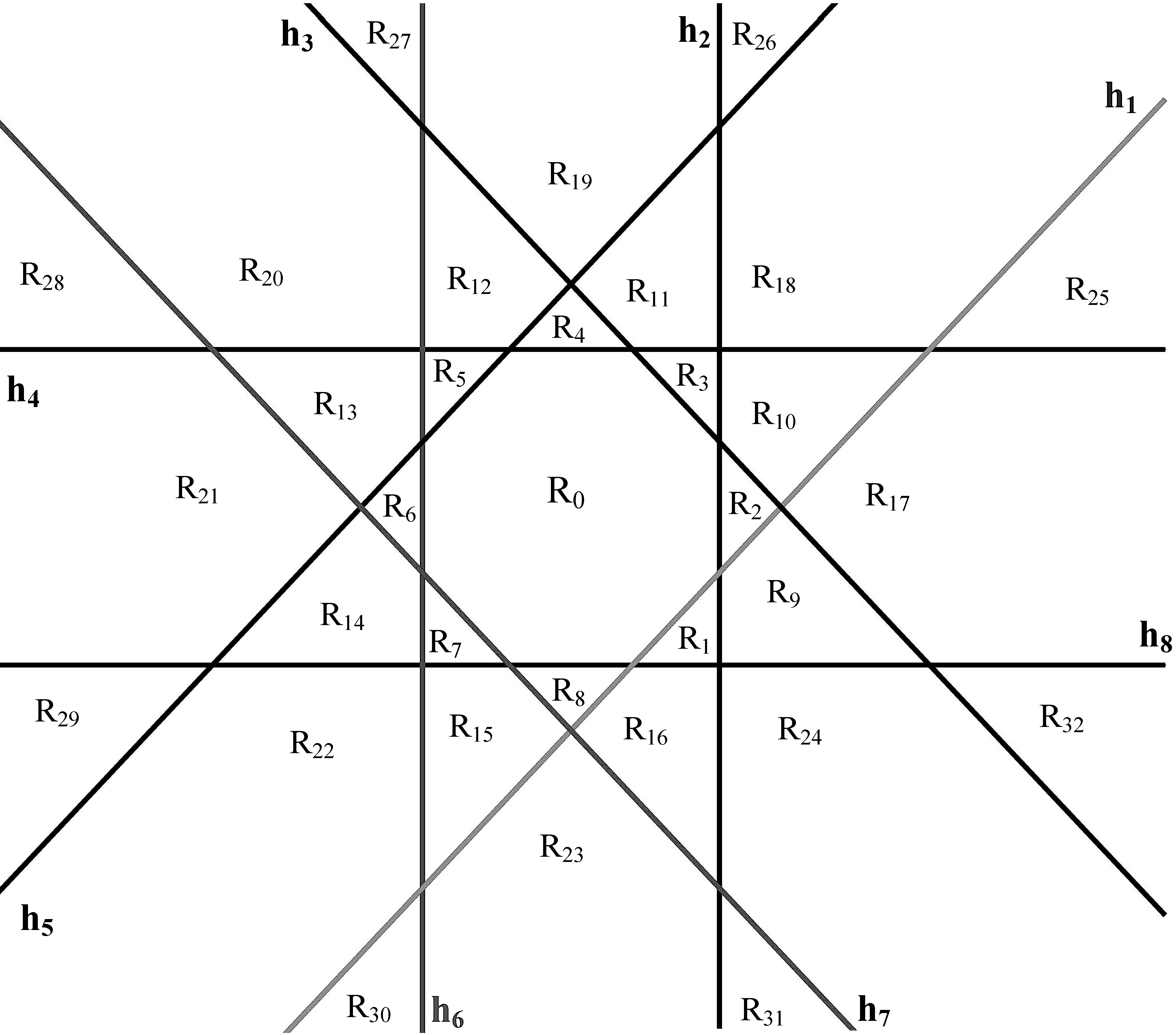}
\caption{The labelled $C_{8}$ hyperplane arrangement.}
\end{figure}
For $1 \le x \le n$ and $1 \le r \le p$, $\#sep(R_{0}, R_{(r-1)n+x}) = r$, so
	$Q_{r}=\bq_n^r$. 
Also for any $\bar{a}\equiv \overline{b+1}$ we have that
\begin{equation*}
\#sep\left(R_{(i-1)n+1}, R_{(j-1)n+x}\right)=\#sep\left(R_{(i-1)n+\bar{a}}, R_{(j-1)n+\bar{b}}\right)
\end{equation*}
For any integer $1 \le a \le n$, the $(x', x)$-entry of $C_{ij}$ is equal to the $(\overline{x'+a}, \overline{x+a})$-entry, where the overline
refers to modulo $n$. 
Therefore, $C_{ij}$ is a circulant matrix.

Suppose $n \ge 3$ and $i \le j$.
For $1 \le x \le n$, note that
$C_{1j}$ is given by $\#sep\left(R_1, R_{((j-1)n+x}\right)$, $C_{2j}$ is given by $\#sep\left(R_{n+1} R_{(j-1)n+x}\right)$, and $C_{3j}$ is given by $\#sep\left(R_{2n+1}, R_{(j-1)n+x}\right)$ etc.

Now consider $C_{ij}$ where $i \le j$.  Specifically for $i \le j \le p$ and $1 \le x \le n$,
\begin{equation*}
	\#sep\left(R_{(i-1)n+1}, R_{(j-1)n+x}\right) =\begin{cases}
		j-i, & sep\left(R_0, R_{(i-1)n+1}\right) \subseteq sep\left(R_0, R_{(j-1)n+x}\right),\\
 		j-i + 2k, & k \text{ elements from } sep\left(R_0, R_{(i-1)n+1}\right) \notin sep\left(R_0, R_{(j-1)n+x}\right),\\
 		j+i, & sep\left(R_0, R_{(i-1)n+1}\right) \cap sep\left(R_0, R_{(j-1)n+x}\right) = \emptyset. \end{cases}
\end{equation*}
%
%

%

By analyzing three cases we have that for $1 \le k < i$ and $i \le j$,
\begin{equation} \label{Cn Eij cases}
	\#sep\left(R_{(i-1)n+1}, R_{(j-1)n+x}\right) =\begin{cases}
		j-i, & x \in  \{\underbrace{1, n, n-1, \ldots, n-(j-i-1)}_{\text{$j-i+1$ terms in set}}\},\\
		j-i + 2k, & x \in \{k+1, n-(j-i-1)-k\},\\
		j+i, & x \in \{\underbrace{i+1, i+2, \dots, n-(j-1)}_{\text{$n+1-i-j$ terms in set}}\}.
	 \end{cases}
\end{equation}
Therefore $C_{ij}$ is indeed as given in \eqref{Cn Eij form}, and the theorem is proved.
\end{proof}
\begin{thm} \label{cn snf} For any $n \ge 3$, the Smith normal form of the $q$-Varchenko matrix for the cyclic model arrangement is
\setlength{\arraycolsep}{4pt}
\setlength{\extrarowheight}{-2pt}
\begin{equation}
 \label{snf cn}
\left( \begin{array}{ccc}1 & 0 & 0 \\ 0 & (1-q^2)I_{n} & 0 \\ 0 & 0 & (1-q^2)^2I_{n(p-1)}\end{array}\right).
 \end{equation}

The corresponding left and right transition matrices are (respectively) $SP$ and $(SP)^{t}$ where
\setlength{\arraycolsep}{2.75pt}
\setlength{\extrarowheight}{-2pt}
  \begin{equation} \label{transition matrix cn} \setlength{\extrarowheight}{-3pt}
	  P=\left(\begin{array}{ccccc}
	1 & 0 & 0 & 0 & 0\\
	-qI_{n \times 1} & I_n & 0 & 0 & 0\\
	0 & -qI_n & I_n & 0 & 0\\
	\vdots & \vdots & \ddots & \ddots  & \vdots \\
	0 & \dots & 0 & -qI_n & I_n
	\end{array}\right), \
	S= \left(\begin{array}{ccccc}
	1 & 0 & 0 & \dots & 0\\
	0 & I_n & 0 & \dots & 0\\
	0 & -qJ & I_n &  \dots & 0\\
	\vdots & \vdots & \ddots & \ddots & \vdots \\
	0 & \dots & 0 & -qJ & I_n
	\end{array}\right),
\end{equation}
where $J$ is the $n\times n$ permutation matrix $\begin{pmatrix} 0 & I_{n-1}\\ 1 & 0\end{pmatrix}$.
\end{thm}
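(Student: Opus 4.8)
The plan is to verify directly that $(SP)\,V_q\,(SP)^t$ equals the diagonal matrix in \eqref{snf cn}, carrying out the congruence in two stages corresponding to the two factors $P$ and $S$ of \eqref{transition matrix cn}. Since $P$ and $S$ are block lower-triangular with identity diagonal blocks, each has determinant $1$ and hence lies in $GL_{1+np}(\Z[q])$; so does $SP$. Because the target matrix \eqref{snf cn} is diagonal with entries $1\mid(1-q^2)\mid(1-q^2)^2$ forming a divisibility chain, once the congruence is established it follows that \eqref{snf cn} is a Smith normal form of $V_q$ over $\Z[q]$ and that $SP$, $(SP)^t$ are the asserted transition matrices. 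Throughout I index the block rows and columns of $V_q$ by $0,1,\dots,p$, with block $0$ the $1\times1$ corner and blocks $1,\dots,p$ of size $n$.

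First I would compute $P\,V_q\,P^t$. The matrix $P$ performs the block row operations subtracting $q$ times each block from the next, and $P^t$ the mirror column operations; the effect on the $(i,j)$ block for $i,j\ge2$ is the mixed difference $C_{ij}-qC_{i,j-1}-qC_{i-1,j}+q^2C_{i-1,j-1}$, while on block row/column $0$ it produces the differences $Q_r-qQ_{r-1}$. Since $Q_r=\bq^r_n=q\,Q_{r-1}$, these vanish, so block $0$ decouples, leaving $P\,V_q\,P^t=\mathrm{diag}(1,M)$ for an $np\times np$ symmetric matrix $M$ whose blocks are the mixed differences above (with $M_{1j}=C_{1j}-qC_{1,j-1}$, the all-ones contributions from block $0$ cancelling). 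The heart of this stage is to evaluate these differences using the explicit first rows \eqref{Cn Eij form}, \eqref{Cn Eij cases}. I expect the outcome to be the uniform formula
\begin{equation*}
M_{ij}=q^{\,j-i}\,(1-q^2)\,C\!\left(e_{\,n-(j-i)+1}\right),\qquad 1\le i\le j\le p,
\end{equation*}
where $e_k$ is the $k$-th standard basis row vector and $M_{ji}=M_{ij}^t$; in particular $M_{ii}=(1-q^2)I_n$, and each off-diagonal block is $(1-q^2)$ times a power of $q$ times a single cyclic shift depending only on the gap $j-i$.

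Second I would conjugate $M$ by the $np\times np$ lower-right submatrix $S'$ of $S$, which fixes blocks $0$ and $1$ and replaces block row $k$ by (block row $k$) $-qJ\cdot$(block row $k-1$) for $2\le k\le p$. The key structural input is that the shift matrices satisfy $C(e_n)=J^t$ and $C(e_a)C(e_b)=C(e_{a+b-1})$ (indices mod $n$), so the $-qJ$ operations telescope against the gap-shifts and the accompanying powers of $q$. A short computation then shows that $S'MS'^t$ is block diagonal: the first block stays $(1-q^2)I_n$, every block $(i,i)$ with $i\ge2$ is promoted from $(1-q^2)I_n$ to $(1-q^2)^2I_n$ through $M_{ii}-qM_{i,i-1}J^t-qJM_{i-1,i}+q^2JM_{i-1,i-1}J^t$, and every off-diagonal block cancels by the same telescoping, including the blocks coupling row $1$ to the rest. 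Since $S\,\mathrm{diag}(1,M)\,S^t=\mathrm{diag}(1,S'MS'^t)$, this yields exactly \eqref{snf cn}.

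The main obstacle is the closed-form evaluation of the mixed second difference $M_{ij}$ from the piecewise description \eqref{Cn Eij cases}. The interior (plateau) positions cancel termwise, but the cancellations at the boundary positions, where the increasing segment, the $q^{i+j}$ plateau, the decreasing segment and the $q^{j-i}$ tail meet, are delicate and must leave exactly one surviving entry $q^{j-i}(1-q^2)$ producing the single shift above. This requires care about the degenerate ranges, in particular when the plateau length $n+1-i-j$ is small or zero (which happens for the largest indices, i.e.\ when $i+j$ approaches $n+1$, and depends on the parity of $n$ through $p=\lfloor(n+1)/2\rfloor$), as well as the boundary row $i=1$ where block $0$ participates. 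Once the uniform formula for $M_{ij}$ is in hand the second stage is routine shift bookkeeping, so establishing that one closed form is where essentially all the work lies.
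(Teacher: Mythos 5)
Your proposal follows essentially the same route as the paper's proof: a first congruence by $P$ reducing $V_q$ to $\mathrm{diag}(1,M)$ with $M_{ij}=q^{\,j-i}(1-q^2)(J^t)^{\,j-i}$ for $i\le j$ (your $C(e_{n-(j-i)+1})$ is exactly $(J^t)^{\,j-i}$, matching \eqref{congruence of P}), followed by the telescoping congruence by $S$ that promotes the diagonal blocks beyond the first to $(1-q^2)^2I_n$ and kills the off-diagonal ones. The case-by-case evaluation of the mixed second difference from \eqref{Cn Eij cases}, which you correctly single out as the main work, is precisely where the paper's verification of \eqref{cn after step 1} and \eqref{cn after step 2} lies.
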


\begin{proof}
Note that the first row and column of the $q$-Varchenko matrix $V_q$ (see \eqref{e:qVar}) 
%
 for $C_{n}$ can be transformed into its Smith normal form over $\Z[q]$ in two actions: congruent action by $P$ (i.e. $V\longrightarrow PVP^t$) followed by congruent action by $S$.
After left multiplication by $P$, the $Q^{t}_{k}$ blocks in $V_q$ become zeros since for $1 \le k \le p$, $Q^{t}_{k} - qQ^{t}_{k-1} = 0$.  Also the $(i, j)$-entry $C_{ij}$ in $V_q$ becomes
\begin{equation} \label{cn after step 1}
        C_{ij} - qC_{i-1, \ j} = \begin{cases}
        \displaystyle q^{j-i}(1-q^2)\left( \sum_{k=0}^{j-i}\left(J^{t}\right)^{k} +  \sum_{m=1}^{i-1}q^{2m}J^{m}\right),  & \mbox{ for }1\le i \le j,\\
      &\\
        \displaystyle  q^{j-i}(1-q^2)J^{j-i}\sum_{k=0}^{i-1}q^{2k}J^{k}, & \mbox{ for } i > j . \end{cases}
\end{equation}

Then after right multiplication by $P^{t}$, the $Q_{k}$ blocks become zeros since for $1 \le k \le p$, $Q_{k} - qQ_{k-1} = 0$.  Furthermore, the $(i, j)$-entry $C_{ij}$ block matrices in $V_q$ become
\begin{equation}  \label{cn after step 2}
\left(C_{ij} - qC_{i-1, j}\right) - q \left(C_{i, j-1} - q C_{i-1, j-1} \right) = (1-q^2) q^{|i-j|} J^{\overline{i-j}},
\end{equation}
which can be verified case by case by using \eqref{cn after step 1}.

Therefore, after congruent action of $P$
  the $q$-Varchenko matrix is transformed to:
\begin{equation}\label{congruence of P}
	P V_{q} P^{t} =
	 \left(\begin{array}{c cc cc}1 & 0 &  0 &  \dots & 0\\
	0 & (1-q^2)I_{n} & q(1-q^2)J^t &  \dots  & q^{p-1}(1-q^2)(J^t)^{p-1} \\
	0 & q(1-q^2)J & (1-q^2)I_{n} &  \dots & q^{p-2}(1-q^2)(J^t)^{p-2}\\
	\vdots & \vdots & \vdots  & \ddots & \vdots\\
	0 & q^{p-1}(1-q^2)J^{p-1} & q^{p-2}(1-q^2)J^{p-2} & \dots &(1-q^2) I_{n}
	\end{array}\right)_{(1+np) \times (1+np)}.
\end{equation}
Then, left multiplication by $S$ to $PV_qP^t$ leaves the first row unchanged.  The block matrix entries below the diagonal become zero, the block matrix entries along the diagonal become $(1-q^2)^2I$ and the block matrix entries above the diagonal become $q^{j-i}(1-q^2)^2(J^{t})^{j-i}$.

Multiplication on the right by $S^{t}$ to the resulting matrix leaves the entries on and below the diagonal unchanged, and the entries above the diagonal become zeros.
Indeed for $1 < i < j$,
 \begin{flalign}\label{cn step 4}
        &	C_{ij}-qC_{i-1, j} - q \left(C_{i, j-1} - qC_{i-1, j-1}\right) - qJ\Big(  C_{i-1, j} - qC_{i-2, j} - q \left(C_{i-1, j-1} - qC_{i-2, j-1}\right) \Big)- \notag \\
        &	 \bigg( C_{i, j-1} - qC_{i-1, j-1} - q\left(C_{i, j-2} - q C_{i-1, j-2}\right) - qJ \Big(C_{i-1, j-1} - qC_{i-2, j-1} - q \left(C_{i-1, j-2} - q C_{i-2, j-2} \right) \Big) \bigg) (qJ^t)\notag\\
        &	 = q^{j-i} (1-q^2) (J^t)^{j-i} -q^{j-1-i} (J^t)^{j-1-i} qJ^t= 0.
\end{flalign}
Thus the congruent action by $S$ to the matrix in (\ref{congruence of P}) transforms the $q$-Varchenko matrix $V_q$ to its Smith normal form in (\ref{snf cn})
\end{proof}
\subsection{The Dihedral Model $D_n$} Now we take the hyperplanes from the cyclic model and move them into $\R^3$ so that the lines forming edges of the regular $n$-gon become planes forming a regular $n$-gon shaped cylinder. Then we add one hyperplane $h$ that is perpendicular to $h_1, h_2, \ldots, h_n$. This doubles the number of regions. The distance enumerator for $\mathcal{A}$ (with respect to $R^{+}_{0}$) is
\begin{equation*}D_{\mathcal{A}, R^{+}_{0}}(t) = 1+ t + \sum_{k=1}^{p} n (t^k +t^{k+1}), \ \mbox{ where } \ p = \lfloor \frac{n+1}{2} \rfloor.
\end{equation*}

We will refer to the regions above $h$ as $R_{x}^+$ and the regions below $h$ as $R_{x'}^-$. The $R_{x}^+$ regions are labelled according to the cyclic model.  Effectively, we label the $R_{x}^{-}$ regions by taking a reflection of the cyclic model regions along the axis of symmetry between $R^{+}_1$ and $R^{+}_n$.
\begin{thm} \label{Dn Vq}
The $q$-Varchenko matrix for the dihedral model arrangement is a $2(np+1) \times 2(np+1)$ matrix with the following block form:
\setlength{\arraycolsep}{3pt}
\setlength{\extrarowheight}{-1pt}
\begin{equation}
V_q=\left(\begin{array}{cc ccc}1 & \overline{Q}_1 & \overline{Q}_2 & \dots & \overline{Q}_p\\
	\overline{Q}^{t}_{1} & \overline{C}_{11} & \overline{C}_{12} &  \dots & \overline{C}_{1p} \\
	\overline{Q}^{t}_{2} & \overline{C}_{12} & \overline{C}_{22}  & \dots  & \overline{C}_{2p}\\
	\vdots & \vdots & \vdots & \ddots & \vdots\\
	\overline{Q}^{t}_{p} & \overline{C}_{1p} & \overline{C}_{2p} & \dots & \overline{C}_{pp}
	\end{array}\right),
\end{equation}
where the blocks are defined by
\begin{equation*}
\overline{Q}_{i}=\begin{pmatrix} Q_{i} & qQ_{i}\\ qQ_{i} & Q_{i}\end{pmatrix}, 0\le i\le p; \qquad
\overline{C}_{ij}=\begin{pmatrix} C_{ij} & qJ^{i-1}KC_{ij}\\ qJ^{i-1}KC_{ij} & C_{ij}\end{pmatrix}, 1\le i\le j\le p.
\end{equation*}
Here $Q_0=1, Q_{r}=\bq_n^r$, $C_{ij}$ is defined in \eqref{Cn Eij form}, $K$ is the $n\times n$ skew diagonal matrix
or $RC(0, \cdots, 1)_{n\times n}$
and $J=C(0, 1, 0, \cdots, 0)_{n\times n}$.
%
\end{thm}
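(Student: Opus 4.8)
The plan is to reduce every entry of the dihedral $V_q$ to a cyclic separation count already supplied by Theorem~\ref{Cn Vq}, using that $D_n$ is built from two congruent copies of the cyclic arrangement $C_n$ (one in each open half-space cut out by $h$) glued along the single plane $h$. Write a generic region as $R^{\epsilon}_{i,x}$, where $\epsilon\in\{+,-\}$ records the side of $h$, $i$ is the cyclic layer, and $x\in\{1,\dots,n\}$ is the angular index, with $R^{\pm}_0$ the two central cells. Since $h$ is perpendicular to each $h_1,\dots,h_n$, a hyperplane of $D_n$ separates $R^{\epsilon}_{i,x}$ from $R^{\epsilon'}_{j,y}$ precisely when it is $h$ (which happens iff $\epsilon\neq\epsilon'$) or one of the cylinder planes separating the two projections into the $n$-gon plane. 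With Proposition~\ref{sep set} this yields the master formula
\begin{equation*}
\#sep\bigl(R^{\epsilon}_{i,x},R^{\epsilon'}_{j,y}\bigr)=\delta_{\epsilon\neq\epsilon'}+\#sep_{C_n}\bigl(\pi R^{\epsilon}_{i,x},\,\pi R^{\epsilon'}_{j,y}\bigr),
\end{equation*}
where $\pi$ is the projection to the cyclic model and $\delta_{\epsilon\neq\epsilon'}\in\{0,1\}$. The exponent of $q$ therefore always splits as ``one factor $q$ for crossing $h$'' times ``a cyclic entry'', which is exactly the $2\times2$ pattern $\left(\begin{smallmatrix}\cdot&q\,\cdot\\ q\,\cdot&\cdot\end{smallmatrix}\right)$ visible in $\overline{Q}_i$ and $\overline{C}_{ij}$.

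Next I would pin down $\pi$ on each side. The $+$ cells project by the identity, so $\pi R^{+}_{i,x}$ is the layer-$i$, position-$x$ cell of $C_n$; the $-$ cells are labelled by reflecting the cyclic labels across the axis between $R^{+}_1$ and $R^{+}_n$, so $\pi R^{-}_{j,y}$ is the layer-$j$ cell in position $\sigma_j(y)$ for the reflection permutation $\sigma_j$. The crux is to compute $\sigma_j$ explicitly. Because the iterative labelling of Theorem~\ref{Cn Vq} advances its angular origin by one step each time the layer increases, the single geometric reflection acts on the angular coordinates of layer $i$ by $x\mapsto n+2-i-x \pmod n$; as an operator on $\R^n$ this is exactly the permutation matrix $J^{\,i-1}K$, where $K\colon e_x\mapsto e_{n+1-x}$ is the reversal and $J\colon e_x\mapsto e_{x-1}$ the rotation of the statement.

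With the master formula and $\sigma_i=J^{i-1}K$ in hand the blocks assemble directly. The $(+,+)$ and $(-,-)$ sub-blocks of $\overline{C}_{ij}$ carry no copy of $h$; the first reduces to the pure cyclic count, hence equals $C_{ij}$ of \eqref{Cn Eij form}, and the second equals $C_{ij}$ as well because a reflection is an isometry of $C_n$ and so preserves all $\#sep_{C_n}$ values. For the $(+,-)$ sub-block I use that same reflection-invariance: applying the reflection to the $+$ (row) region replaces its position $x'$ by $\sigma_i(x')$ and leaves the $-$ (column) position fixed, so the entry at $(x',x)$ becomes $q\,(C_{ij})_{\sigma_i(x'),x}$, i.e.\ the sub-block is $q\,J^{i-1}K\,C_{ij}$. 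The $(-,+)$ sub-block is its transpose by symmetry of $V_q$, and one checks $q\,J^{i-1}KC_{ij}$ is already symmetric from the circulant identities $KJ^{a}K=J^{-a}$ and $KC_{ij}K=C_{ij}^{t}$ (together with $J^{t}=J^{-1}$ and the fact that circulants commute with $J$), so the two off-diagonal sub-blocks coincide as stated. The rows and columns through $R^{\pm}_0$ are handled identically: the central cells are fixed by the reflection, so no $J$ or $K$ enters and $\overline{Q}_i=\left(\begin{smallmatrix}Q_i&qQ_i\\ qQ_i&Q_i\end{smallmatrix}\right)$ with $Q_0=1$, $Q_r=\bq_n^r$. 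Finally the order of $V_q$ is $2(1+np)$ because doubling the $1+np$ cyclic cells matches the distance enumerator $1+t+\sum_{k=1}^{p}n(t^k+t^{k+1})$, and the same circulant identities confirm $\overline{C}_{ij}=\overline{C}_{ij}^{t}$, hence $V_q=V_q^{t}$.

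I expect the main obstacle to be the middle step---establishing $\sigma_i(x)=n+2-i-x$, equivalently $\sigma_i=J^{i-1}K$. This is where the layer-dependence of the answer is forced: one must track how the recursive angular origin of the cyclic labelling from Theorem~\ref{Cn Vq} rotates by one position between consecutive layers, and verify that this rotation is absorbed precisely into the power $J^{i-1}$ that multiplies $C_{ij}$ (rather than, say, $J^{j-1}$). Once that identification is secured, the remaining verifications are routine substitutions into the master formula and bookkeeping with the circulant/reversal identities.
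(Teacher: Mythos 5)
Your proposal is correct and takes essentially the same route as the paper: both arguments reduce every entry of the dihedral matrix to a cyclic separation count, with one extra factor of $q$ exactly when the two regions lie on opposite sides of $h$, and both hinge on the layer-dependent permutation $J^{i-1}K$ relating the $-$-side labels to the $+$-side labels. The only real difference is bookkeeping: the paper computes the single row $\#sep\left(R^{+}_{(i-1)n+n-(i-1)}, R^{-}_{(j-1)n+x}\right)$ from separating sets, sees it is $q$ times the first row of $C_{ij}$ placed in row $n-(i-1)$, and invokes the reverse-circulant structure of both $C_{i^{+}j^{-}}$ and $qJ^{i-1}KC_{ij}$ to conclude from one row; you instead compute the full reflection permutation $\sigma_i(x)=n+2-i-x$, which agrees with the paper's formula $sep\left(R_{0}^-, R_{(j-1)n+x}^-\right)=\{h_{n+1-x},\ldots,h_{n+2-x-j}\}$, so the step you flag as the main obstacle does go through. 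One correction: your closing claim that $\overline{C}_{ij}=\overline{C}_{ij}^{t}$ is false for $i<j$, because the diagonal sub-blocks $C_{ij}$ are not symmetric (the paper proves $C_{ij}^{t}=J^{j-i}C_{ij}$); only the off-diagonal sub-block $qJ^{i-1}KC_{ij}$ is symmetric, which is all your argument actually needs. Consequently the $(j,i)$ block of $V_q$ is $\overline{C}_{ij}^{t}$ rather than $\overline{C}_{ij}$ — an infelicity already present in the theorem's displayed block form, and visible in the paper's own $D_5$ example, which places $C_{12}^{t}$ in the lower triangle.
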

\begin{proof}For $1 \le k \le p$, there are $n$ regions $R^{\pm}$ such that $\#sep\left(R_{0}^{\pm}, R^{\pm}\right) = k$.  For $1 \le x \le n$:  $R_{0}^{\pm} \le R_{x}^{\pm} \le R_{n+x}^{\pm} \le \ldots \le R_{(p-1)n + x}^{\pm}$ in the weak order of containment. 

Label the $2+2np$ regions of the arrangement in the following way:
\begin{enumerate}
\item $h$ is the hyperplane perpendicular to the hyperplanes  $h_{1}, \ldots, h_{n}$;
\item For $1 \leq x \leq n$, the hyperplane $h_x$ forms an edge of the $n$-gon that shares vertices with the edges formed by $h_{x \pm 1 \ (mod \ n)}$ so that $h_2$ follows $h_1$ moving along the edges in a counter-clockwise direction;
\item the central regions above $h$ is labeled as $R^{+}_{0}$ and below $h$ is labeled as $R^{-}_{0}$;
\item for $1 \le x \le n$, label the region $R$  as $R_x$ if $sep\left(R^{+}_0, R\right) = h_x$  and as $R^{-}_{n+1-x}$ if $sep\left(R^{-}_0, R\right) = h_{x}$; 
\item  For $1 \leq k, m \leq n$, $m+1 \leq p-1$, label the region $R$ as $R^{+}_{(m+1)n+k}$ if
$sep\left(R^{+}_{0}, R^{+}_{m+( (k+1) \ mod(n))}\right) = sep\left(R^{+}_{mn+k}, R'^{+}\right)$
and label it $R^{-}_{(m+1)n+k}$ if
$ sep\left(R^{-}_{0}, R^{-}_{m+( (k+1) \ mod(n))}\right)=sep\left(R^{-}_{mn+k}, R'^{-}\right)$.
\end{enumerate}
Now for $1 \le x \le n$ and $1 \le r \le p$, it is easy to see that $\#sep(R_{0}^{+}, R^{+}_{(r-1)n+x}) = r$ and $\#sep(R_{0}^{+}, R^{-}_{(r-1)n+x}) = r+1$.  Therefore $Q_{r} = (q^{r}, \ldots, q^{r})_{1 \times n}=\bq_n^r$.

Note that though the process is identical for the $R^+$ and $R^-$ regions, the hyperplane involved in each step is not.
For example,
\begin{flalign*}
&sep(R_{1}^{+}, R_{n+1}^{+}) = \{h_{2}\}, \ sep(R_{1}^{-}, R_{n+1}^{-}) = \{h_{n-1}\}; \ \
sep(R_{n+1}^{+}, R_{2n+1}^{+}) = \{h_{3}\}, \ sep(R_{n+1}^{-}, R_{2n+1}^{-}) = \{h_{n-2}\}.
\end{flalign*}
However,
$sep\left(R_0^{\pm}, R_1^{\pm}\right) \subseteq sep\left(R_0^{\pm}, R_{n+1}^{\pm}\right) \subseteq sep\left(R_{0}^{\pm}, R_{2n+1}^{\pm}\right) \subseteq \ldots \subseteq sep(R_0^{\pm}, R_{(p-1)n + 1}^{\pm}).$

For $1 \le i, j \le p$,
\begin{flalign}
	& sep\left(R_{0}^+, R_{(i-1)n+x}^+\right)=\{h_{x}, h_{x+1}, h_{x+2}, \ldots, h_{x+(i-1)}\},  \\
	& sep\left(R_{0}^-, R_{(j-1)n+x}^-\right)=\{h_{n+1-x}, h_{n+1-(x+1)}, h_{n+1-(x+2)}, \ldots, h_{n+1-(x+(j-1))}\},
\end{flalign}
where the index $i$ in $h_i$ is modulo $n$.
\begin{figure}
\includegraphics[scale=.3, trim={.1in .1in 0in .05in},clip]{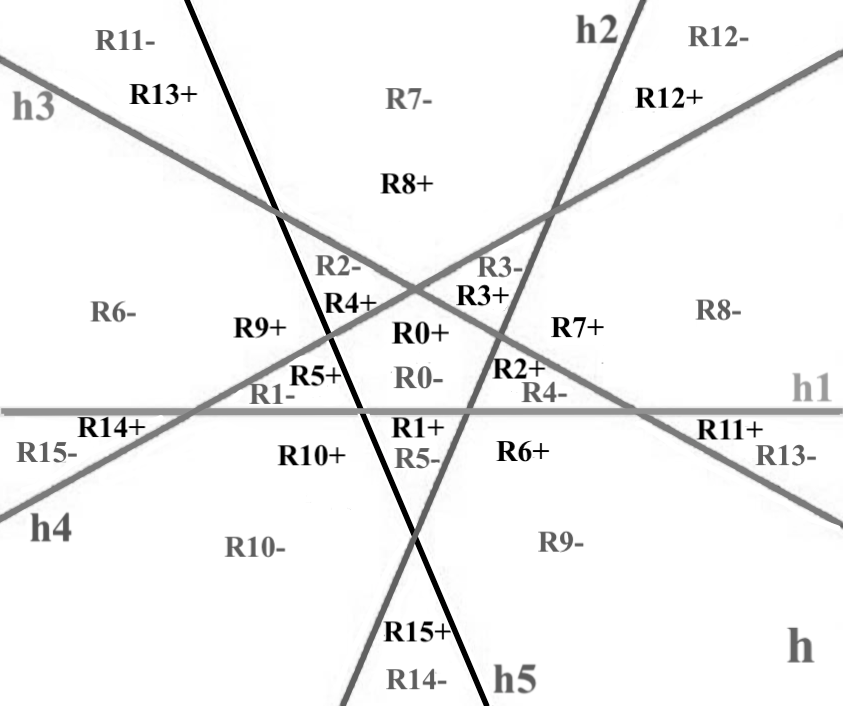}
\caption{The labelled $D_{5}$ arrangement, where $k=5$ and $m=2$. The hyperplane $h$ is this page. The $+$ regions are on the front side of the page; the $-$ regions are on the back side of the page.}
\end{figure}
Hence the $2(np+1) \times 2(np+1)$ $q$-Varchenko matrix $V_q$ for $D_n$ is:

\begin{equation}
\setlength{\arraycolsep}{6pt}
\setlength{\extrarowheight}{-2pt}
V_{q} =	\left(\begin{array}{cc |cc |c  |cc |c |cc}1 & q & Q_1 & Q_2  & \ldots & Q_r & Q_{r+1} & \ldots & Q_p & Q_{p+1}\\
        q & 1 & Q_2 & Q_1 &  \ldots & Q_{r+1} & Q_r & \ldots & Q_{p+1} & Q_p\\
        \hline
        Q_1^{t} & Q_{2}^{t} & C_{1^+1^+} & C_{1^+1^-} &  \ldots & C_{1^+r^+} & C_{1^+r^-} & \ldots & C_{1^+p^+} & C_{1^+p^-} \\
        Q_{2}^{t} & Q_{1}^{t} & C_{1^-1^+} & C_{1^-1^-}  & \ldots & C_{1^-r^+} & C_{1^-r^-} & \ldots & C_{1^-p^+} & C_{1^-p^-} \\
           \hline
        \vdots & \vdots & \vdots & \vdots  & \ddots & \vdots & \vdots & \dots & \vdots & \vdots \\
                   \hline
        Q_r^{t} & Q_{r+1}^{t} & C_{r^+1^+} & C_{r^+1^-}  & \ldots & C_{r^+r^+} & C_{r^+r^-} & \ldots & C_{r^+p^+} & C_{r^+p^-}\\
        Q_{r+1}^{t} & Q_r^{t} & C_{r^-1^+} & C_{r^-1^-} & \ldots & C_{r^-r^+} & C_{r^-r^-} & \ldots & C_{r^-p^+} & C_{r^-p^-} \\
                   \hline
                \vdots & \vdots & \vdots & \vdots & \dots & \vdots & \vdots & \ddots & \vdots & \vdots \\
                   \hline
        Q_p^{t} & Q_{p+1}^{t} & C_{p^+1^+} & C_{p^+1^-} & \ldots & C_{p^+r^+} & C_{p^+r^-} & \ldots & C_{p^+p^+} & C_{p^+p^-}\\
        Q_{p+1}^{t} & Q_p^{t} & C_{p^-1^+} & C_{p^-1^-} &  \ldots & C_{p^-r^+} & C_{p^-r^-} & \ldots & C_{p^-p^+} & C_{p^-p^-} \end{array} \right),
\end{equation}
where $Q_{r} =\bq_n^r$, $C_{i^{\pm}j^{\pm}} = C_{i^{\mp}j^{\mp}}$, $C_{j^{\pm}i^{\pm}} = C_{i^{\pm}j^{\pm}}^t$, $C_{i^{\pm}j^{\mp}} = C_{i^{\mp}j^{\pm}}$ and $C_{j^{\mp}i^{\pm}} = C_{i^{\pm}j^{\mp}}^t.$
As immediate consequences of the indexing method of the regions, $C_{i^{\pm}j^{\pm}}$ are circulant matrices, and $C_{i^{\pm}j^{\pm}}= C_{ij}$ as stated in Theorem \ref{Cn Vq}.

For $1 \le x \le n$ and $0 \leq a \le n$
\begin{equation}\label{Dn Eij ++ relations}
\#sep\left(R^{\pm}_{(i-1)n+1+a}, R^{\pm}_{(j-1)n+x-a}\right)=\#sep\left(R^{\pm}_{(i-1)n+1}, R^{\pm}_{(j-1)n+x}\right).
\end{equation}
which is a consequence of the fact that when $x+k=n$ for some $1 \le k < n$, $R^{\pm}_{(j-1)n + (x+k \ mod(n))} = R^{\pm}_{(j-1)n}$.

For $1 \le x', x \le n$, the $(x', x)$-entry of $C_{i^+ j^-}$ in $V_q$ is equal to the number of the hyperplanes separating the regions $R^{+}_{(i-1)n + x'}$ and $R^{-}_{(j-1)n+x}$. Now let's compute it as in the proof of Theorem \ref{Cn Vq}
starting with $\#sep(R_{(i-1)n +x'}, R_{(j-1)n+x})$ with $x'=n-(i-1)$. This will give us the $n-(i-1)$ row of $C_{i^+j^-}$ instead of the first row.  However, as a result of the indexing convention, we see that
 that the $C_{i^{\pm}j^{\mp}}$ blocks are reverse-circulant matrices.  Therefore they are determined entirely by the first row, or equivalently, any particular row.
For $1 \le x \le n$, we have the following observations:
\begin{align}
	sep\left(R_{0}^+, R_{(i-1)n+n-(i-1)}^+\right) & 
= \{h_n, h_{n-1}, h_{n-2}, \ldots, h_{n-(i-1)}\}, \\
	sep\left(R_{0}^+, R_{(j-1)n+x}^-\right)&=\{h, h_{n+1-x}, h_{n+1-(x+1)}, h_{n+1-(x+2)}, \ldots, h_{n+1-(x+(j-1))}\}.
\end{align}
Therefore by Proposition \ref{sep set},
\begin{multline}
	sep\left(R^{+}_{(i-1)n+n-(i-1)}, R^{-}_{(j-1)n + x}\right) =
	\{h_n, h_{n-1}, \ldots, h_{n-(i-1)}\} \cup \{h, h_{n+1-x}, \ldots, h_{n+1-(x+(j-1))}\} -\\
	 ( \{h_n, h_{n-1}, \ldots, h_{n-(i-1)}\} \cap \{h, h_{n+1-x}, \ldots, h_{n+1-(x+(j-1))}\}).
\end{multline}
Then we have for $1 \le k < i\le j$,
%
%
\begin{equation}
	\#sep\left(R^+_{(i-1)n+n-(i-1)}, R^-_{(j-1)n+x}\right) =\begin{cases}
	 j+1-i, & x \in \{\underbrace{1, n, n-1, \ldots, n-(j-i-1)}_{\text{$j-i+1$ terms in set}}\}.\\
	 j+1-i + 2k, & x \in \{k+1, n-(j-i-1)-k\}.\\
	 j+1+i, & x \in \{\underbrace{i+1, i+2, \dots, n-(j-1)}_{\text{$n+1-i-j$ terms in set}}\}.
	 \end{cases}
\end{equation}
Comparing it to \eqref{Cn Eij cases}, we see that this is precisely $1+\#sep(R_{(i-1)n+1}^{+}, R_{(j-1)n+x}^{+})$.  Therefore, the $n-(i-1)$ row of $C_{i^+j^-}$ equals $q$ times the first row of $C_{ij}$ as defined in Theorem \ref{Cn Vq}.  Left multiplication by $J^{i-1}K$ acting on the circulant matrix $C_{ij}$ takes the first row to the $n$th row, and then takes the $n$th row to the $n-(i-1)$ row. Since both $C_{i^{+}j^{-}}$ and $qJ^{i-1}KC_{ij}$ are reverse circulant matrices, the equality of one row of each matrix is sufficient to determine the equality of the two matrices.  Therefore, $qJ^{i-1}KC_{ij} = C_{i^+j^-}$.
 \end{proof}
\begin{proposition} \label{C_{ij}^t=J^{j-i}C_{ij}} For $i \le j$,  $C_{i j}^t = J^{j-i} C_{i j}.$
\end{proposition}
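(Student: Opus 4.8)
The plan is to exploit the fact that every matrix in sight is circulant, so that the full $n\times n$ identity collapses to a single reflection symmetry of the generating (first) row of $C_{ij}$. Recall that a circulant $C(a_1,\dots,a_n)$ has $(x',x)$-entry $a_{\overline{x-x'+1}}$, where the bar denotes reduction modulo $n$ into $\{1,\dots,n\}$. Both $C_{ij}^t$ and $J^{\,j-i}C_{ij}$ are again circulant — the first as the transpose of a circulant, the second because $J=C(0,1,0,\dots,0)$ is circulant and products of circulants are circulant — so each is determined by its first row. Writing $\mathbf a=(a_1,\dots,a_n)$ for the first row of $C_{ij}$ given in \eqref{Cn Eij form}, a direct computation of entries gives that the first row of $C_{ij}^t$ is $(a_{\overline{2-m}})_{m=1}^{n}$, while that of $J^{\,j-i}C_{ij}$ is $(a_{\overline{m-(j-i)}})_{m=1}^{n}$.

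Comparing the two generating rows, the proposition is therefore equivalent to the single scalar identity
\[
a_{\overline{2-m}} = a_{\overline{m-(j-i)}} \qquad (1\le m\le n),
\]
which, after the substitution $v=\overline{2-m}$, becomes the statement that $\mathbf a$ is invariant under the reflection $\sigma\colon v\mapsto \overline{(2-(j-i))-v}$ of $\Z/n\Z$. Thus the entire claim reduces to checking that the first row of $C_{ij}$ is palindromic about this axis.

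To finish, I would read off the $q$-exponents of $\mathbf a$ from the four concatenated blocks in \eqref{Cn Eij form}: the rising arm $(q^{j-i},q^{j-i+2},\dots,q^{j+i-2})$ in positions $1,\dots,i$; the constant peak $q^{i+j}$ in positions $i+1,\dots,n+1-j$; the falling arm $(q^{j+i-2},\dots,q^{j-i+2})$ in positions $n+2-j,\dots,n+i-j$; and the constant tail $q^{j-i}$ in positions $n+i-j+1,\dots,n$. One then verifies that $\sigma$ respects this decomposition: it reverses the peak block setwise, and it pairs each remaining position with one carrying an equal $q$-exponent, matching the rising arm against the falling arm and the two constant stretches. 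Because $i+j=(j-i)+2i$ and $n+i-j+1=n+1-(j-i)$, the block boundaries line up exactly under $\sigma$, so the exponents agree term by term and the palindrome property holds.

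The main obstacle is purely the bookkeeping of the modular indices at the block boundaries, together with the degenerate instances: the case $i=j$ (where $j-i=0$, the tail block is empty, and $\sigma$ reduces to the ordinary palindrome $v\mapsto\overline{2-v}$, making $C_{ii}$ symmetric) and small $n$ in which some blocks collapse. Once the reflection $\sigma$ is shown to preserve the block decomposition with matching exponents, the identity $C_{ij}^t=J^{\,j-i}C_{ij}$ follows immediately.
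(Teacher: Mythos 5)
Your argument is correct and follows essentially the same route as the paper: both proofs use the circulant structure to reduce the matrix identity to a comparison of generating rows, observing that the first row of $J^{j-i}C_{ij}$ is the $(1+j-i)$-th row of $C_{ij}$ and matching it against the first column of $C_{ij}$. The paper simply writes out that shifted row explicitly from \eqref{Cn Eij form} and recognizes it as $C_{ij}^t$, whereas you phrase the same comparison as a reflection symmetry of the generating vector; the block-by-block check you outline does go through.
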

\begin{proof}
Note that for $1 \leq r, c \leq n$ we have  $J_{r c} = \delta_{r+1 \ (mod \ n), c}$, and hence $(J^{j-i})_{r c} = \delta_{r + j - i, c}$. So
\begin{equation*}
	(J^{j-i} C_{i j})_{r c} = \sum_{k=1}^n (J^{j-i})_{rk} (C_{ij})_{kc} = \sum_{k=1}^n \delta_{r+j-i, k} (C_{i j})_{k c} = (C_{i j})_{r+j-i, c}.
\end{equation*}
Since it is a circulant matrix, $J^{j-i} C_{ij}$ can be written using the $(1+j-i)^{th}$ row of $C_{i j}$. Therefore, we have
\begin{equation*}
	J^{j-i} C_{i j} = C(\underbrace{q^{j-i}, \ldots, q^{j-i},}_{\text{$j-i+1$}} q^{j-i+2}, \ldots, q^{j-i+2(i-1)}, \underbrace{q^{j+i}, \ldots, q^{j+i},}_{\text{$n+1-i-j$}} q^{j-i+2(i-1)}, \ldots, q^{j-i+2}) = C_{ij}^t.
\end{equation*}
\end{proof}
 \begin{thm}  \label{dn snf} For $n \ge 3$, the Smith normal form of the $q$-Varchenko matrix $V_q$ given in Theorem \ref{Dn Vq} for
 the dihedral model arrangement is
 \setlength{\arraycolsep}{3pt}
 \setlength{\extrarowheight}{-2pt}
\begin{equation}
         \left( \begin{array} {cccc}
        1 & 0 & 0 & 0\\
        0 & (1-q^2)I_{n+1} & 0 & 0\\
        0 & 0 & (1-q^2)^2 I_{np} & 0\\
        0 & 0 & 0 & (1-q^2)^3 I_{n(p-1)}
        \end{array} \right).
\end{equation}

This is obtained by the congruent action of $R T S P$, where
\begin{equation*}
\setlength{\arraycolsep}{3pt}
\setlength{\extrarowheight}{-2pt}
P=
\left(\begin{array}{cc|cc|c|cc|cc}
1 & 0 & 0 & 0  &  \dots  & 0 & 0 & 0 & 0   \\
-q & 1 & 0 & 0 &  \dots  & 0 & 0    & 0 & 0 \\
\hline
-q1 & 0 & I_n & 0 & \dots  & 0 & 0  & 0 & 0  \\
0 & -q1 & 0 & I_n &  \dots & 0 & 0  & 0 &  0  \\
\hline
0 & 0 & -qI_n & 0 &  \ddots  & 0 & 0 & 0 & 0  \\
0 & 0 & 0 & -qI_n & \ddots  & 0  & 0  & 0 & 0 \\
\hline
\vdots & \vdots & \vdots & \vdots & \ddots & \vdots &  \vdots & \vdots & \vdots  \\
\hline
 0 & 0 & 0 & 0 &  \dots  & -qI_n & 0 & I_n & 0\\
0 & 0 & 0 & 0 &  \dots   & 0 & -qI_n & 0 & I_n
\end{array}\right), \
\setlength{\arraycolsep}{3pt}
\setlength{\extrarowheight}{-2pt}
S =
\left(\begin{array}{cc|cc|c|cc|cc}
1 & 0 & 0 & 0  &  \dots  & 0 & 0 & 0 & 0   \\
0 & 1 & 0 & 0 &  \dots  & 0 & 0    & 0 & 0 \\
\hline
0 & 0 & I_n & 0 & \dots  & 0 & 0  & 0 & 0  \\
0 & 0 & 0 & I_n &  \dots & 0 & 0  & 0 &  0  \\
\hline
0 & 0 & -qJ & 0 &  \ddots  & 0 & 0 & 0 & 0  \\
0 & 0 & 0 & -qJ & \ddots  & 0  & 0  & 0 & 0 \\
\hline
\vdots & \vdots & \vdots & \vdots & \ddots & \vdots &  \vdots & \vdots & \vdots  \\
\hline
 0 & 0 & 0 & 0 &  \dots  & -qJ & 0 & I_n & 0\\
0 & 0 & 0 & 0 &  \dots   & 0 & -qJ & 0 & I_n
        \end{array}\right),
\end{equation*}
\begin{equation*}
\setlength{\arraycolsep}{3pt}
\setlength{\extrarowheight}{-2pt}
  T=
        \left(\begin{array}{cc|cc|cc|c|cc}
        1 & 0 & 0 & 0  & 0 & 0 & \dots  & 0 & 0\\
        0 & 1 & 0 & 0 & 0 & 0 & \dots  & 0 & 0\\
        \hline
        0 & 0 & I_n & 0 & 0 & 0 & \dots & 0 & 0\\
        0 & 0 & -qK & I_n & 0 & 0 & \dots  & 0 &  0\\
       \hline
	 0 & 0 & 0 & 0 & I_n & 0 & \dots & 0 & 0\\
        0 & 0 & 0 & 0 & -qJK & I_n & \dots  & 0 &  0\\
       \hline
        \vdots & \vdots & \vdots &\vdots & \vdots & \vdots &  \ddots & \vdots & \vdots \\
        \hline
        0 & 0 & 0 & 0 & 0 & 0 & \dots  & I_n & 0\\
        0 & 0 & 0 & 0 & 0 & 0 & \dots  & -qJ^{p-1}K & I_n
        \end{array}\right),
\end{equation*}
\begin{equation*}
\setlength{\arraycolsep}{3pt}
\setlength{\extrarowheight}{-4pt}
	R = \left( \begin{array}{cc cc cc} A_1 & 0 & 0 & \dots  & 0 & 0\\ 0 & A_{2} &  &  & \dots &  0\\
	0 & 0 & \ddots & \dots & \dots & 0\\ 0 & 0  & \dots & A_{j} &   & 0\\
	\vdots & \vdots & \vdots & \vdots & \ddots & \vdots \\ 0 & \dots  & 0 &  &  & A_{p}\end{array} \right); \ \mbox{ where }
	A_1= \left(\begin{array}{cccc}
	1 & 0 & 0 & 0\\
	0 & 1 & 0 & 0\\
	0 & 0 & I_n & 0\\
	0 & 0 & 0 & I_n
	\end{array}\right), \ A_{j} = \left( \begin{array}{cccc} I_{n} & 0 & \ldots & 0\\ 0 & \ldots & 0 & I_{n}\end{array} \right)_{2n \times np} \mbox{for $2 \le j \le p$}.
\end{equation*}
\setlength{\arraycolsep}{6pt}
\setlength{\extrarowheight}{0pt}
 \end{thm}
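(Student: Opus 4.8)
The plan is to mimic the four-step congruence reduction used for the cyclic model in Theorem~\ref{cn snf}, but now tracking the extra $2\times 2$ block structure in the $\pm$ regions recorded by $\overline{Q}_i$ and $\overline{C}_{ij}$. Throughout I would exploit the two dihedral relations satisfied by the rotation $J=C(0,1,0,\dots,0)$ and the reflection $K=RC(0,\dots,1)$, namely $K^2=I_n$ and $KJ^m=J^{-m}K$; together these give that $J^{i-1}K$ is an involution for every $i$, since $J^{i-1}KJ^{i-1}K=J^{i-1}J^{-(i-1)}K^2=I_n$. These are exactly the relations that make the $\pm$ coupling in $\overline{C}_{ij}=\left(\begin{smallmatrix} C_{ij} & qJ^{i-1}KC_{ij}\\ qJ^{i-1}KC_{ij} & C_{ij}\end{smallmatrix}\right)$ collapse.

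First I would apply $P$. Since $P$ acts on each $\pm$ component by the same telescoping $Q_k-qQ_{k-1}=0$ used in Theorem~\ref{cn snf}, the congruence $PV_qP^t$ kills all the $\overline{Q}$ blocks and turns the level blocks into the differences computed in \eqref{cn after step 1}--\eqref{cn after step 2}, now applied entrywise to the $2\times 2$ array $\overline{C}_{ij}$. The outcome is the dihedral analogue of \eqref{congruence of P}: a matrix that is block-banded in the levels, each surviving block being a $2\times 2$ array whose diagonal is $(1-q^2)$ times a power of $J$ and whose off-diagonal carries the $J^{i-1}K$ twist. Next I would apply $S$, which again reproduces the cyclic reduction \eqref{cn step 4} inside each $\pm$ component: it clears the sub-diagonal level blocks and upgrades the diagonal level blocks to the factor $(1-q^2)^2$, leaving only the $\pm$ coupling within each diagonal level block.

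The genuinely new step is $T$, whose $i$-th diagonal block $\left(\begin{smallmatrix} I_n & 0\\ -qJ^{i-1}K & I_n\end{smallmatrix}\right)$ subtracts $qJ^{i-1}K$ times the $+$ row from the $-$ row. Because $(J^{i-1}K)^2=I_n$, the lower-right block is multiplied by the extra factor $(1-q^2)$, and the congruent column operation clears the remaining off-diagonal entry; this is precisely what produces the third power $(1-q^2)^3$ on the level-$\geq 2$ decoupled $-$ blocks. Finally $R$ performs the regrouping and selection over $\Z[q]$ that collects the resulting diagonal entries, separating the $R_0^-$ center contribution (the single hyperplane $h$ separating $R_0^+$ from $R_0^-$, which supplies the ``$+1$'' in the multiplicity $n+1$ of $(1-q^2)$) from the remaining blocks, so that the invariant factors appear with the stated multiplicities $1,\,n+1,\,np,\,n(p-1)$, whose sum is $2+2np=2(np+1)$ as required.

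The main obstacle is the compatibility of the $S$ and $T$ steps: $S$ mixes different levels through powers of $J$, while $T$ mixes the $\pm$ components through $J^{i-1}K$, and these do not commute. One must verify, using $KJ^m=J^{-m}K$ together with the transpose identity $C_{ij}^t=J^{j-i}C_{ij}$ of Proposition~\ref{C_{ij}^t=J^{j-i}C_{ij}}, that after $S$ the off-diagonal coupling in each diagonal level block is \emph{still exactly} $\pm q\,J^{i-1}K$ and not some $J$-conjugate that would spoil the involution identity; only then does $T$'s decoupling yield the clean factor $(1-q^2)$. Once this compatibility is established, confirming that $R$ is realized over $\Z[q]$ and that the compressed diagonal has exactly the claimed block sizes is routine bookkeeping.
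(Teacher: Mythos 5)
Your plan is correct and follows essentially the same route as the paper: the successive congruent actions of $P$, $S$, $T$, and $R$, with the decoupling of the $\pm$ components in step $T$ resting on exactly the identity $(J^{i-1}K)^2=I_n$ that the paper uses to produce the factor $(1-q^2)^3$. The compatibility issue you flag (that after the $S$ step the off-diagonal coupling in each diagonal level block is still $q(1-q^2)^2J^{i-1}K$) is precisely the verification the paper carries out, so filling in those computations would complete the argument as written.
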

\begin{proof} We will index the first row (resp. column) of the Varchenko matrix $V_q$ with entries $Q_k$ (resp. $Q_k^t$) as row $i=0$ (resp. column $j=0$). The entry $C_{11}$ will be indexed by row $i=1$ and column $j=1$. The Varchenko matrix
$V_{q}$ for $D_{n}$ can be transformed into its Smith normal form over $\Z[q]$ in successive congruent actions
of $P$, $S$, $T$ and $R$.

After left multiplication by $P$, the $Q^{t}_{k}$ blocks become zeros since for $1 \le k \le p$, $Q^{t}_{k} - qQ^{t}_{k-1} = 0$. Furthermore, the original $C_{ij}$ block matrices have the form given in \eqref{cn after step 1} and  the original $qJ^{i-1}KC_{ij}$ block matrices have the form
\begin{equation} \label{dn lemmas summary}
  \begin{cases}
\displaystyle q^{j}(1-q^2)\left(\sum_{k=0}^{j-1} J^{k}\right)K, & \mbox{for } i = 1,\\
& \\
\displaystyle q^{j-i+1}(1-q^2)J^{j-1}\left(\sum_{k=0}^{j-i}(J^{t})^{k} + \sum_{m=1}^{i-1}q^{2m}J^{m}\right)K, & \mbox{for } i>1.
\end{cases}
\end{equation}

After right multiplication by $P^{t}$, the $Q_{k}$ blocks become zeros since for $1 \le k \le p$, $Q_{k} - qQ_{k-1} = 0$ and the first column remains unchanged.
 The block matrices with the original entry $C_{ij}$ become
        \begin{equation} \label{cn in dn after step 2}
        (1-q^2)q^{|i-j|}J^{\overline{i-j}}.
        \end{equation}
Furthermore, the original $qJ^{i-1}KC_{ij}$ block entries become
\begin{equation} \label{dn step 2 cor sum}
	q^{j-i+1} (1-q^2) J^{j-1}\left(  (1-q^2)\left( \sum_{k=0}^{i-2} q^{2k} J^{k} \right)+ q^{2(i-1)} J^{i-1} \right) K.
\end{equation}
This can be verified by direct computations  case  by case, depending on the position of the original $qJ^{i-1}KC_{ij}$ blocks.

Then, left multiplication by the matrix  $S$ has the effect of beginning with the last two rows and subtracting $qJ$ times the previous two rows; then the same operation is repeated by subtracting $qJ$ times the third-to-last two rows from the second-to-last two rows, and so forth. The $qKC_{1 j}$-entries remain the same: $q^{j} (1-q^2) J^{j-1}K$.

For all of the $qJ^{i-1}KC_{i j}$-entries that are below the diagonal and for which $i \neq j$, left multiplication by $S$ carries out the operation of subtracting $qJ$ times the $qJ^{i-1}KC_{i, j-1}$-entry from the $qJ^{i-1}KC_{i j}$-entry. In this case, the result equals $0$.  For all of the $qJ^{i-1}KC_{i j}$-entries that are either on or above the diagonal or for which $i = j$, left multiplication by $S$ carries out the operation of subtracting $qJ$ times the $qJ^{i-2}KC_{i-1, j}$-entry from the $qJ^{i-1}KC_{i j}$-entry.


Right multiplication by $S^{t}$ has the effect of beginning with the last two columns and subtracting the previous two columns times $qJ^t$; then the same operation is repeated by subtracting the third-to-last two columns times $qJ^t$ from the second-to-last two columns, and so forth.  The $qKC_{11}$-entries remain the same throughout this step, and the $qJ^{i-1}KC_{i i}$-entries remain $q(1-q^2)^2J^{i-1} K$, since the corresponding entries to the left of them are zeros.
This operation gives us zeros above the block diagonal entries for which $i \neq j$, since  for $ i > j$, the current $qJ^{i-1}KC_{i j}$-entry minus the current $qJ^{i-1} K C_{i, j-1}$-entry times $qJ^t$ is
\begin{equation*}
	q^{j-i+1}(1-q^2)^2 J^{j-1}K - (q^{(j-1) -i+1} (1-q^2)^2 J^{j-2}K) qJ^t
	= q^{j-i+1} (1-q^2)^2 (J^{j-1}K - J^{j-2}K J^t) = 0.
\end{equation*}

Left multiplication by $T$ has the effect of beginning with the last two rows, subtracting $qJ^{p-1}K$ times the second to last row from the last row; then the same operation is repeated by subtracting $qJ^{p-2}K$ times the fourth to last row from the third to last row, and so on. The $qJ^{i-1}KC_{ii}$-entries above the diagonal remain the same, and the ones below the diagonal become zero since each $qJ^{i-1}KC_{ii}$-entry equals $qJ^{i-1}K$ times the corresponding $C_{ii}$-entry which is $(1-q^2)I$ for $C_{11}$ and $(1-q^2)^2 I$ for $i \ge 2$.

Along the diagonal, we get $(1-q^2)I$ in the first $\overline{C_{11}}$ diagonal position and $(1-q^2)^2I$ in its second diagonal position:
\begin{align*}
 	&(1-q^2)I - qK\left( q(1-q^2)K \right) = (1-q^2) I - q^2 (1-q^2) K^2 = (1-q^2)I - q^2(1-q^2)I = (1-q^2)^2I.
\end{align*}

For $i \ge 2$, we get $(1-q^2)^2I$ in the first $\overline{C_{ii}}$ diagonal position and $(1-q^2)^3$ in  its second diagonal position:
\begin{align*}
        &(1-q^2)^2 I - qJ^{i-1}K \left( q(1-q^2)^2J^{i-1} K \right) = (1-q^2)^2 I - q^2 (1-q^2)^2 J^{i-1} K J^{i-1}K\\
        & = (1-q^2)^2 I - q^2 (1-q^2)^2 I = (1-q^2)^3 I.
\end{align*}

At this point, we have calculated $TSP V_{q} (SP)^{t}$ which is:

\setlength{\extrarowheight}{-2pt}
  \setlength{\arraycolsep}{3pt}
 \begin{flalign*} 
&  \left(\begin{array}{cc|cc|cc|c|cc}
         1 & 0 & 0 &  0 & 0 & 0 & \dots &  0 & 0\\
        0 & t &  0 & 0 & 0 & 0 & \dots & 0 & 0\\
        \hline
        0 & 0 & tI_n & qtK & 0  & 0 & \dots & 0 & 0\\
        0 & 0 & 0 & t^2 I_n & 0  & 0 & \dots & 0 & 0\\
        \hline
        0 & 0 & 0 & 0 & t^2I_n & qt^2JK & \dots & 0   & 0\\
        0 & 0 & 0 & 0 & 0 & t^3I_n &  \dots &0  & 0\\
        \hline
        \vdots & \vdots & \vdots & \vdots & \vdots & \vdots &  \ddots & \vdots  & \vdots\\
        \hline
        0 & 0 &  0 & 0 & 0 & 0& \dots  &t^2I_n & qt^2J^{p-1}K\\
        0 & 0 &  0 & 0 & 0 &  0 & \dots  & 0 & t^3I_n
        \end{array}\right),
        \end{flalign*}
where $t=1-q^2$.

Right multiplication by $T^{t}$ has the effect of beginning with the last column and subtracting the previous column times $q(J^{p-1}K)^t = qJ^{p-1}K$; then the same operation is repeated by subtracting the fourth-to-last column times $qJ^{p-1}K$ from the third-to-last column, and so on.
The $qJ^{i-1}KC_{ii}$-entries above the diagonal become zero since each $qJ^{i-1}KC_{ii}$-entry equals $qJ^{i-1}K$ times the corresponding $(i, i)$-entry which is $(1-q^2)I$ for $C_{11}$ and $(1-q^2)^2 I$ for $i \ge 2$.  Now the matrix is in the diagonal form:

\begin{equation*}
\setlength{\extrarowheight}{-2.5pt}
          		  \setlength{\arraycolsep}{2pt}
		  TSP V_{q} (TSP)^{t} =
        \left(\begin{array}{cc|cc|cc|c|cc}
         1 & 0 & 0 & 0 & 0 & 0 &  \dots & 0 & 0\\
        0 & t & 0 & 0 & 0 & 0 & \dots & 0 & 0\\
        \hline
        0 & 0 & tI_n & 0 & 0  & 0 & \dots & 0 & 0\\
        0 & 0 & 0 & t^2 I_n & 0 & 0 & \dots & 0 & 0\\
        \hline
        0 & 0 & 0 & 0 & t^2I_n & 0 & \dots & 0 & 0\\
        0 & 0 & 0 & 0 & 0 & t^3I_n  & \dots & 0 & 0\\
        \hline
        \vdots & \vdots & \vdots & \vdots & \vdots & \vdots & \ddots & \vdots & \vdots \\
        \hline
        0 & 0 & 0  & 0 & 0 & 0 & \dots & t^2I_n & 0\\
        0 & 0 & 0  & 0 & 0 & 0 & \dots & 0 & t^3I_n
        \end{array}\right)
        \end{equation*}

Finally the left and right multiplication by the permutation matrices $R$ and $R^{t}$ (respectively) moves the $(1-q^2)^3I_{n}$ diagonal entries to the last $n(p-1)$ block diagonal places which gives the Smith normal form of the $q$-Varchenko matrix $V_q$ in this case.
\end{proof}
\begin{example} The $q$-Varchenko matrix for $D_{5}$ is the $32\times 32$ matrix
\begin{equation*}
\setlength{\extrarowheight}{0pt}
\setlength{\arraycolsep}{6pt}
\left(\begin{array}{cc|cc|cc|cc}1 & q & Q_1 & qQ_1 & Q_2 & qQ_2 & Q_3 & qQ_3\\
q & 1 & qQ_1 & Q_1 & qQ_2 & Q_2 & qQ_3 & Q_3\\
\hline
Q_1^{t} & qQ_{1}^{t} & C_{11} & qK C_{11} & C_{12} & qK C_{12} & C_{13} & qK C_{13}\\
qQ_{1}^{t} & Q_{1}^{t} & qK C_{11} & C_{11} & qK C_{12} & C_{12} & qK C_{13} & C_{13}\\
\hline
Q_2^{t} & qQ_2^{t} & C_{12}^{t} & qK C_{12} & C_{22} & q J K C_{22} & C_{23} & q J K C_{23}\\
qQ_2^{t} & Q_2^{t} & qK C_{12} & C_{12}^{t} & q J K C_{22} & C_{22}& q J K C_{23} & C_{23}\\
\hline
Q_3^{t} & qQ_{3}^t & C_{13}^{t} & qK C_{13} & C_{23}^{t} & q J K C_{23} & C_{33} & q J^2 K C_{33}\\
qQ_{3}^{t} & Q_{3}^{t} & qK C_{13} & C_{13}^{t} & q J K C_{23} & C_{23}^{t} & q J^2 K C_{33} & C_{33}
\end{array}\right).
\end{equation*}

The circulant matrices $C_{ij}$, $1\le j\le 3$ and the $1 \times 5$ row vectors $Q_{k}$, $k=1, 2, 3$, are as defined in Theorem \ref{Cn Vq}.
Here, $RC$ denotes reverse circulant matrices.
\begin{equation*}
	\begin{array}{ccc}
	qKC_{11} = RC(q^3, q^3, q^3, q^3, q), & qKC_{12} = RC(q^4, q^4, q^4, q^2, q^2), & qKC_{13} = RC(q^5, q^5, q^3, q^3, q^3),\\
	qJKC_{22} = RC(q^5, q^5, q^3, q, q^3), & qJKC_{23} = RC(q^6, q^4, q^2, q^2, q^4), &  qJ^2KC_{33} = RC(q^5, q^3, q, q^3, q^5).
	\end{array}
\end{equation*}	
\begin{equation*}
	\setlength{\arraycolsep}{4pt}
	\setlength{\extrarowheight}{-3pt}
 P =\left(\begin{array}{cc|cc|cc|cc}
1 & 0 & 0 & 0 & 0 & 0 & 0 & 0\\
-q & 1 & 0 & 0 & 0 & 0 & 0 & 0\\
\hline
-q 1 & 0 & I_5 & 0 & 0 & 0 & 0 & 0\\
0 & -q1 & 0 & I_5 & 0 & 0 & 0 & 0\\
\hline
0 & 0 & -qI_5 & 0 & I_5 & 0 & 0 & 0\\
0 & 0 & 0 & -qI_5 & 0 & I_5 & 0 & 0\\
\hline
0 & 0 & 0 & 0 & -qI_5 & 0 & I_5 & 0\\
0 & 0 & 0 & 0 & 0 & -qI_5 & 0 & I_5
\end{array}\right), \
        S=\left(\begin{array}{cc|cc|cc|cc}1 & 0 & 0 & 0 & 0 & 0 & 0 & 0\\
        0 & 1 & 0 & 0 & 0 & 0 & 0 & 0\\
        \hline
        0 & 0 & I_5 & 0 & 0 & 0 & 0 & 0\\
        0 & 0 & 0 & I_5 & 0 & 0 & 0 & 0\\
        \hline
          0 & 0 & -qJ & 0 & I_5 & 0 & 0 & 0\\
        0 & 0 & 0 & -qJ & 0 & I_5 & 0 & 0\\
         \hline
        0 & 0 & 0 & 0 & -qJ & 0 & I_5 & 0\\
        0 & 0 & 0 & 0 & 0 & -qJ & 0 & I_5
        \end{array}\right),
 \end{equation*}
\begin{equation*}
\setlength{\arraycolsep}{4pt}
\setlength{\extrarowheight}{-3pt}
  T=\left(\begin{array}{cc|cc|cc|cc}1 & 0 & 0 & 0 & 0 & 0 & 0 & 0\\
        0 & 1 & 0 & 0 & 0 & 0 & 0 & 0\\
        \hline
         0 & 0 & I_5 & 0 & 0 & 0 & 0 & 0\\
        0 & 0 & -qK & I_5 & 0 & 0 & 0 & 0\\
        \hline
       0 & 0 & 0 & 0 & I_5 & 0 & 0 & 0\\
        0 & 0 & 0 & 0 & -qJK & I_5 & 0 & 0\\
        \hline
        0 & 0 & 0 & 0 & 0 & 0 & I_5 & 0\\
        0 & 0 & 0 & 0 & 0 & 0 & -qJ^2K & I_5  \end{array}\right), \
        R=\left(\begin{array}{cccc|cccc}
        1 & 0 & 0 & 0 & 0 & 0 & 0 & 0\\
        0 & 1 & 0 & 0 & 0 & 0 & 0 & 0\\
        0 & 0 & I_5 & 0 & 0 & 0 & 0 & 0\\
        0 & 0 & 0 & I_5 & 0 & 0 & 0 & 0\\
        \hline
        0 & 0 & 0 & 0 & I_5 & 0 & 0 & 0\\
        0 & 0 & 0 & 0 & 0 & 0 & I_5 & 0\\
        \hline
        0 & 0 & 0 & 0 & 0 & I_5 & 0 & 0\\
        0 & 0 & 0 & 0 & 0 & 0 & 0 & I_5
        \end{array}\right).
        \end{equation*}
        \noindent
                 \begin{flalign*}
                   SP V_q P^{t}=
        &\setlength{\arraycolsep}{2.5pt}\setlength{\extrarowheight}{0pt}
 \left(\begin{array}{cc|cc|cc|cc}
         1 & 0 & 0 & 0 & 0 & 0 & 0 & 0\\
        0 & t & 0 & 0 & 0 & 0 & 0 & 0\\
        \hline
        0 & 0 & tI & qtK & qtJ^{4} & q^2tJK & q^2tJ^{3} & q^3tJ^2K\\
        0 & 0 & qtK & tI & q^2tJK & qtJ^{4} & q^3tJ^2K & q^2tJ^{3}\\
        \hline
        0 & 0 & 0 & 0 & t^2I & qt^2JK & qt^2 J^{4} & q^2t^2J^2K\\
        0 & 0 & 0 & 0 & qt^2JK & t^2I & q^2t^2J^2K & qt^2J^{4}\\
        \hline
        0 & 0 & 0 & 0 & 0 & 0 & t^2I & qt^2J^2K\\
        0 & 0 & 0 & 0 & 0 & 0 & qt^2J^2K & t^2I
        \end{array}\right).
        \end{flalign*}
where $t=1-q^2$. The Smith normal form is $\displaystyle RTSP V_{q} (RTSP)^{t} = diag(1, \ tI_{6}, \ t^2 I_{15}, \ t^3 I_{10})$.
\end{example}

\section{Polyhedral Models}\label{poly models}
There are five regular Platonic regular polyhedra whose symmetry groups are the exceptional Kleinian subgroups
\cite{Sl}. This section treats the exceptional ones. In each case we give the $q$-Varchenko matrix $V_q$, its Smith normal form
over $\mathbb{Z}[q]$ and its transition matrices. It can be checked by direct calculations that multiplying $V_q$ on the left and right by the given transition matrices we obtain the corresponding  Smith normal form. Further details on these calculations can be seen in \cite{Naomi}.

Denote the region $R_{i}$ as $i$; similarly, let $ij$ (respectively, $ijk$) denote the region $R$ separated from $R_{0}$ by the hyperplanes $h_{i}$ and $h_{j}$ (respectively, $h_{i}$, $h_{j}$ and $h_{k}$).
\subsection{Tetrahedron}
\label{chap-three}
 \begin{figure}[ht]
\includegraphics[scale=.15]{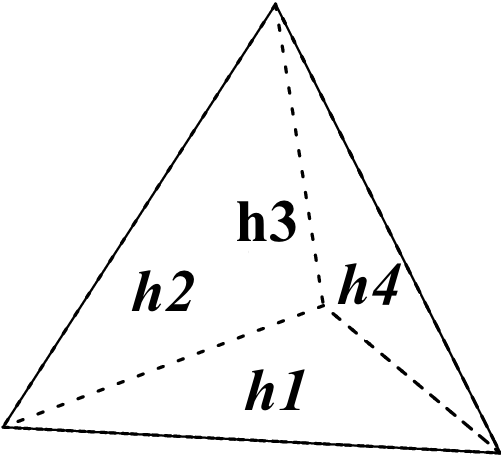}
\caption{Tetrahedron. Italicized hyperplanes are not visible from this view.}
\label{fig: tet}
\end{figure}
The distance enumerator with respect to $R_{0}$ is $D_{\mathcal{A}, R_0}(t) =1 + 4t + 6t^2 + 4t^3$.  Let $R_{0}$ denote the central region of the hyperplane arrangement.  Label the four regions $R_1, R_2, R_3, R_4$ so that $h_i$ separates $R_0$ from $R_i$ as shown in \fref{fig: tet}.

The $15$ regions are indexed in the following order, and denoted by the hyperplanes in $sep(R, R_0)$:
\begin{equation} \label{tet regions}
\setlength{\arraycolsep}{2.75pt}
\setlength{\extrarowheight}{0pt}
\begin{array}{c|| cccc|| cc cc |cc ||cc cc}
0 & 1 & 2 & 3 & 4 & 12 & 23 & 34 & 14 & 13 & 24 & 123 & 234 & 341 & 412
\end{array}
\end{equation}
\begin{equation*}
\setlength{\extrarowheight}{-2pt}
\setlength{\arraycolsep}{6pt}
V_q = \left( \begin{array}{c ||cccc ||cccc |cc|| cccc}
1 & q & q & q & q & q^2 & q^2 & q^2 & q^2 & q^2 & q^2 & q^3 & q^3 & q^3 & q^3\\
\hline
\hline
q & 1 & q^2 & q^2 & q^2 & q & q^3 & q^3 & q & q & q^3 & q^2 & q^4 & q^2 & q^2\\
q & q^2 & 1 & q^2 & q^2 & q & q & q^3 & q^3 & q^3 & q & q^2 & q^2 & q^4 & q^2\\
q & q^2 & q^2 & 1 & q^2 & q^3 & q & q & q^3 & q & q^3 & q^2 & q^2 & q^2 & q^4\\
q & q^2 & q^2 & q^2 & 1 & q^3 & q^3 & q & q & q^3 & q & q^4 & q^2 & q^2 & q^2\\
\hline
\hline
q^2 & q & q & q^3 & q^3 & 1 & q^2 & q^4 & q^2 & q^2 & q^2 & q & q^3 & q^3 & q\\
q^2 & q^3 & q & q & q^3 & q^2 & 1 & q^2 & q^4 & q^2 & q^2 & q & q & q^3 & q^3\\
q^2 & q^3 & q^3 & q & q & q^4 & q^2 & 1 & q^2 & q^2 & q^2 & q^3 & q & q & q^3\\
q^2 & q & q^3 & q^3 & q & q^2 & q^4 & q^2 & 1 & q^2 & q^2 & q^3 & q^3 & q & q\\
\hline
q^2 & q & q^3 & q & q^3 & q^2 & q^2 & q^2 & q^2 & 1 & q^4 & q & q^3 & q & q^3 \\
q^2 & q^3 & q & q^3 & q & q^2 & q^2 & q^2 & q^2 & q^4 & 1 & q^3 & q & q^3 & q\\
\hline
\hline
q^3 & q^2 & q^2 & q^2 & q^4 & q & q & q^3 & q^3 & q & q^3 & 1 & q^2 & q^2 & q^2\\
q^3 & q^4 & q^2 & q^2 & q^2 & q^3 & q & q & q^3 & q^3 & q & q^2 & 1 & q^2 & q^2\\
q^3 & q^2 & q^4 & q^2 & q^2 & q^3 & q^3 & q & q & q & q^3 & q^2 & q^2 & 1 & q^2\\
q^3 & q^2 & q^2 & q^4 & q^2 & q & q^3 & q^3 & q & q^3 & q & q^2 & q^2 & q^2 & 1\end{array} \right).
\end{equation*}

The Smith normal form is $diag\left(1, \ (1-q^2)I_{4}, \ (1-q^2)^2 I_{6}, \ (1-q^2)^3 I_{4} \right)$.

The left and right transformation matrices are (respectively) $P$ and $P^{t}$, where
\setlength{\arraycolsep}{3pt}
\setlength{\extrarowheight}{-1.5pt}
\begin{equation}
P =\left(\begin{array}{c||c||c|c||c}
1 & 0 & 0 & 0 & 0\\
\hline
\hline
-q1 & I_{4} & 0 & 0 & 0\\
\hline
\hline
q^21 & -q(I+J) & I_{4} & 0 & 0\\
\hline
q^21 & -q(I_{2} \ | \ I_{2}) & 0 & I_{2} & 0\\
\hline
\hline
-q^31 & q^2(I+J+J^2) & -q(I+J) & -q(I_{2} \ | \ I_{2})^{t} & I_{4}\end{array} \right),
\end{equation}
and
\begin{align*}
\left(I_{2} \ | \ I_{2}\right) = \left( \begin{array}{cc|cc} 1 & 0 & 1 & 0\\ 0 & 1 & 0 & 1\end{array}\right), \ \setlength{\extrarowheight}{-4pt}
J = \left(\begin{array}{cccc} 0 & 1 & 0 & 0\\ 0 & 0 & 1 & 0\\ 0 & 0 & 0 & 1\\ 1 & 0 & 0 & 0\end{array}\right).
\end{align*}
\subsection{Cube}
\label{chap-four}
Let $R_0$ denote the central region of the hyperplane arrangement
$\mathcal{A} = \{h_{1}, h_{2}, h_{3}, h_{4}, h_{5}, h_{6}\}$.  The distance enumerator with respect to $R_{0}$ is $D_{\mathcal{A}, R_0}(t) =1 + 6t + 12t^2 + 8t^3$.
Define each of the $6$ regions $R_{i}$ such that $sep(R_{i}, R_0) = h_{i}$ for $1 \le i \le 6$ where the hyperplanes $h_{i}$ are as shown in \fref{fig: cube corners}.

The $27$ regions are indexed in the following order, and denoted by the hyperplanes in $sep(R, R_0)$:
\begin{equation} \label{cube regions}
\setlength{\arraycolsep}{2.75pt}
\setlength{\extrarowheight}{0pt}
\begin{array}{c|| cccccc|| cccccc| cccccc|| cccccc| cc}
0 & 1 & 2 & 3 & 4 & 5 & 6 & 12 & 23 & 34 & 45 & 56 & 61 & 13 & 24 & 35 & 46 & 15 & 26 & 123 & 234 & 345 & 456 & 561 & 612 & 135 & 246 \end{array}
\end{equation}
\begin{figure}[ht]
\includegraphics[scale=.4, trim={0in 0.1in 0in 0.15in},clip]{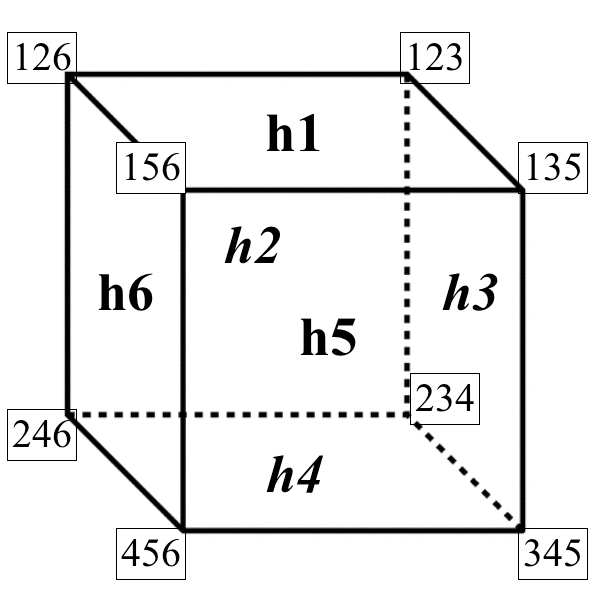}
\caption{Cube with labelled hyperplanes and separating set hyperplanes for regions corresponding to vertices.  Italicized hyperplanes are not visible from this view.
} \label{fig: cube corners}
\end{figure}
\begin{equation*}
V_{q} = \left(\begin{array}{c | c |  c  c | c  c}
1 & Q & Q_{2} & Q_{2} & Q_{3} & Q'_{3}\\
\hline
Q^{t} & v_{11} & v_{12} & v_{13} & v_{14} & v_{15} \\
\hline
Q^{t}_{2} & v_{21} & v_{22} & v_{23} & v_{24} & v_{25}\\
Q^{t}_{2} & v_{31} & v_{32} & v_{33} & v_{34} & v_{35}\\
\hline
Q^{t}_{3} & v_{41} & v_{42} & v_{43} & v_{44} & v_{45}\\
Q'^{t}_{3} & v_{51} & v_{52} & v_{53} & v_{54} & v_{55}
\end{array} \right)
\end{equation*}
where $v_{j i} = v_{ij}^t$ is a row-circulant matrix, $Q_{k} = (q^{k}, q^{k}, q^{k}, q^{k}, q^{k}, q^{k})= \mathbf{q}_{6}^{k}$, and $Q'_{k} = (q^{k}, q^{k}) = \bq_{2}^{k}$.
\begin{equation*}
\setlength{\arraycolsep}{5pt}
\setlength{\extrarowheight}{0pt}
\begin{array}{ccc}
v_{11} = C( 1, q^2 , q^2 , q^2 , q^2 , q^2)_{6 \times 6}, & v_{12} = C( q, q^{3}, q^{3}, q^{3}, q^{3}, q )_{6 \times 6},
& v_{13} = C( q, q^{3} , q^3 , q^3 , q , q^3 )_{6 \times 6}, \end{array}
\end{equation*}
\begin{equation*}
\setlength{\arraycolsep}{5pt}
\setlength{\extrarowheight}{0pt}
\begin{array}{ccc}
v_{14} = C( q^{2}, q^{4}, q^{4}, q^{4}, q^{2}, q^{2})_{6 \times 6}, & v_{15} = C(q^{2}, q^{4})_{6 \times 2}, & v_{22} = C(1, q^{2}, q^{4}, q^{4}, q^{4}, q^{2} )_{6 \times 6}, \end{array}
\end{equation*}
\begin{equation*}
\setlength{\arraycolsep}{5pt}
\setlength{\extrarowheight}{0pt}
\begin{array}{ccc}
v_{23} = C( q^{2}, q^{2}, q^{4}, q^{4}, q^{2}, q^{2} )_{6 \times 6}, & v_{24} = C( q, q^{3}, q^{5}, q^{5}, q^{3}, q )_{6 \times 6}, & v_{25} = C( q^{3}, q^{3} )_{6 \times 2},  \end{array}
\end{equation*}
\begin{equation*}
\setlength{\arraycolsep}{5pt}
\setlength{\extrarowheight}{0pt}
\begin{array}{ccc}
 v_{33} = C( 1, q^{4}, q^{2}, q^{4}, q^{2}, q^{4} )_{6 \times 6}, & v_{34} = C( q, q^{3}, q^{3}, q^{5}, q^{3}, q^{3} )_{6 \times 6}, & v_{35} = C( q, q^{5} )_{6 \times 2}, \end{array}
\end{equation*}
\begin{equation*}
\setlength{\arraycolsep}{5pt}
\setlength{\extrarowheight}{0pt}
\begin{array}{cccc}
 v_{44} = C( 1, q^{2}, q^{4}, q^{6}, q^{4}, q^{2}  )_{6 \times 6}, & v_{45} = C( q^{2}, q^{4}  )_{6 \times 2}, & v_{55} = C( 1, q^{6} )_{2\times 2}, \end{array}
\end{equation*}
where $C_{6\times 2}$ means the $6\times 2$ block of the circulant matrix.

The Smith normal form is
\setlength{\arraycolsep}{6pt}
\setlength{\extrarowheight}{-3pt}
\begin{equation*}
diag\left(1, \ (1-q^2) I_{6}, \ (1-q^2)^2 I_{12}, \ (1-q^2)^3 I_{8} \right).
\end{equation*}
The left transition matrix is $U$ and the right transition matrix is $U^{t}$, where
\setlength{\arraycolsep}{6pt}
\setlength{\extrarowheight}{0pt}

\begin{equation*}
U= \left( \begin{array}{c||c||c|c||c|c}1 & 0 & 0 & 0 & 0 & 0 \\
\hline
\hline
-q1 & I_{6} & 0 & 0 & 0 & 0 \\
\hline
\hline
q^2 1 & -q\left(I + J\right) &  I_{6} & 0 & 0 & 0 \\
\hline
q^2 1 & -q\left(I+J^2\right) & 0 &  I_{6} & 0 & 0 \\
\hline
\hline
-q^3 1 & q^2\left(I + J+J^2\right) & -q\left(I + J\right) & -qI & I_{6} & 0 \\
\hline
-q^3 1 & q^2\left(I_{2} \ | \ I_{2} \ | \ I_{2}\right) & 0 & -q\left(I_{2} \ | \ I_{2} \ | \ I_{2}\right) & 0 & I_{2} \end{array} \right);
\end{equation*}

\begin{equation*} \setlength{\extrarowheight}{-5pt}
 \left(I_{2}\ | \ I_{2} \ | \ I_{2}\right) = \left( \begin{array}{cc|cc|cc} 1 & 0 & 1 & 0 & 1 & 0\\ 0 & 1 & 0 & 1 & 0 & 1\end{array} \right), \
 \mbox{ and } \  \setlength{\extrarowheight}{-6pt} J = \left( \begin{array}{cccccc} 0 & 1 & 0 & 0 & 0 & 0\\ 0 & 0 & 1 & 0 & 0 & 0\\ 0 & 0 & 0 & 1 & 0 & 0\\ 0 &0  & 0 & 0 & 1 & 0\\ 0 & 0 & 0 & 0 & 0 & 1\\ 1 & 0 & 0 & 0 & 0 & 0\end{array} \right).
\end{equation*}
\subsection{Octahedron}
\label{chap-five}
Let $R_{0}$ denote the central region of the hyperplane arrangement
\begin{equation*}
\mathcal{A} = \{h_{1}, h_{2}, h_{3}, h_{4}, h_{5}, h_{6}, h_{7}, h_{8}\}.
\end{equation*}
\begin{equation*}
\mbox{The distance enumerator with respect to $R_{0}$ is } D_{\mathcal{A}, R_0}(t) =1 + 8t + 12t^2 + 24t^3 + 14t^4.
\end{equation*}
\begin{figure}[ht]
\includegraphics[scale=.45]{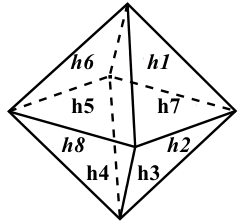}
\caption{Octahedron with labelled hyperplanes.  Hyperplanes labelled in italics are not visible from this view.}
\label{fig: oct}
\end{figure}
The hyperplanes in \fref{fig: oct} are labelled according to the order of the regions $R$ in the cube arrangement such that $\#sep(R_{0}, R) = 3$ as shown in \fref{fig: cube corners} and indexed in \eref{cube regions}.  The last six regions in the octahedron arrangement are indexed by the order of the hyperplanes in the cube arrangement in \fref{fig: cube corners}.

The $59$ regions are indexed in the following order, and denoted by the hyperplanes in $sep(R, R_0)$:
\setlength{\abovedisplayskip}{2pt}
\setlength{\belowdisplayskip}{2pt}
\setlength{\arraycolsep}{1.5pt}
\begin{gather*}
\begin{array}{c || cc cc cc| cc||}
0 & 1 & 2 & 3 & 4 & 5 & 6 & 7  & 8
\end{array}
\begin{array}{cc cc cc| cc cc cc||}
 16 & 12 & 23 & 34 & 45 & 56  & 17 & 28 & 37 & 48 & 57 & 68
 \end{array}\\
\begin{array}{ cc cc cc| cc cc cc| cc cc cc|cccccc}
 167 & 128 & 237 & 348 & 457 & 568 &
 157 & 268 & 137 & 248 & 357 & 468 &
 567 & 168 & 127 & 238 & 347 & 458 &
 156 & 126 & 123 & 234 & 345 & 456
 \end{array}\\
\begin{array}{||cccccc|cc|cc cc cc}
1267 & 1238 & 2347 & 3458 & 4567 & 1568 & 1357 & 2468 & 1567 & 1268 & 1237 & 2348 & 3457 & 4568
\end{array}
\end{gather*}
\begin{equation*}
\setlength{\extrarowheight}{-1pt}
\setlength{\arraycolsep}{4pt}
V_{q} = \left(\begin{array}{c | c  c |c  c | c c  c c | c  c c}
1 & Q_{1} & Q'_{1} & Q_{2} & Q_{2} & Q_{3} & Q_{3} & Q_{3} & Q_{3} & Q_{4} & Q'_{4} & Q_{4} \\
\hline
Q_{1}^t & v_{11} & v_{12} & v_{13} & v_{14} & v_{15} & v_{16} & v_{17} & v_{18} & v_{19} & v_{1, 10} & v_{1, 11} \\ 
Q'^t_{1} & v_{21} & v_{22} & v_{23} & v_{24} & v_{25} & v_{26} & v_{27} & v_{28} & v_{29} & v_{2, 10} & v_{2, 11} \\
\hline
Q_{2}^t & v_{31} & v_{32} & v_{33} & v_{34} & v_{35} & v_{36} & v_{37} & v_{38} & v_{39} & v_{3, 10} & v_{3, 11} \\
Q_{2}^t  & v_{41} & v_{42} & v_{43} & v_{44} & v_{45} & v_{46} & v_{47} & v_{48} & v_{49} & v_{4, 10} & v_{4, 11} \\
\hline
Q_{3}^t  & v_{51} & v_{52} & v_{53} & v_{54} & v_{55} & v_{56} & v_{57} & v_{58} & v_{59} & v_{5, 10} & v_{5, 11} \\
Q_{3}^t & v_{61} & v_{62} & v_{63} & v_{64} & v_{65} & v_{66} & v_{67} & v_{68} & v_{69} & v_{6, 10} & v_{6, 11} \\
Q_{3}^t & v_{71} & v_{72} & v_{73} & v_{74} & v_{75} & v_{76} & v_{77} & v_{78} & v_{79} & v_{7, 10} & v_{7, 11} \\
Q_{3}^t & v_{81} & v_{82} & v_{83} & v_{84} & v_{85} & v_{86} & v_{87} & v_{88} & v_{89} & v_{8, 10} & v_{8, 11} \\
\hline
Q_{4}^t & v_{91} & v_{92} & v_{93} & v_{94} & v_{95} & v_{96} & v_{97} & v_{98} & v_{99} & v_{9, 10} & v_{9, 11} \\
Q'^t_{4} & v_{10, 1} & v_{10, 2} & v_{10, 3} & v_{10, 4} & v_{10, 5} & v_{10, 6} & v_{10, 7} & v_{10, 8} & v_{10, 9} & v_{10, 10} & v_{10, 11} \\
Q_{4}^t & v_{11, 1} & v_{11, 2} & v_{11, 3} & v_{11, 4} & v_{11, 5} & v_{11, 6} & v_{11, 7} & v_{11, 8} & v_{11, 9} & v_{11, 10} & v_{11, 11} \\ 
\end{array} \right),
\end{equation*}
where $v_{j i} = v_{ij}^t$ is a row-circulant matrix, $Q_{k} =(q^k, q^k, q^k, q^k, q^k, q^k)=\bq_6^k$ and $Q'_{k}=(q^k, q^k)=\bq_2^k$.
\begin{equation*}
\setlength{\arraycolsep}{5pt}
\setlength{\extrarowheight}{0pt}
\begin{array}{ccc}
v_{11} = C( 1, q^2 , q^2 , q^2 , q^2 , q^2)_{6 \times 6}, & v_{12} = C( q^2 , q^2 )_{6 \times 2}, & v_{13} = C( q , q , q^3 , q^3 , q^3 , q^3 )_{6 \times 6}, \end{array}
\end{equation*}
\setlength{\arraycolsep}{5pt}
\setlength{\extrarowheight}{0pt}
\begin{equation*}
\begin{array}{ccc}
v_{14} = C( q , q^3 , q^3 , q^3 , q^3 , q^3 )_{6 \times 6} & v_{15} = C( q^2 , q^2 , q^4 , q^4 , q^4 , q^4 )_{6 \times 6}, & v_{16} = C( q^2 , q^4 , q^2 , q^4 , q^4 , q^4 )_{6 \times 6}, \end{array}
\end{equation*}
\setlength{\arraycolsep}{5pt}
\setlength{\extrarowheight}{0pt}
\begin{equation*}
\begin{array}{ccc}
 v_{17} = C( q^4 , q^2 , q^2 , q^4 , q^4 , q^4 )_{6 \times 6}, &  v_{18} = C( q^2 , q^2 , q^2 , q^4 , q^4 , q^4 )_{6 \times 6}, & v_{19} = C( q^3 , q^3 , q^5 , q^5 , q^5 , q^3 )_{6 \times 6},\end{array}
\end{equation*}
\setlength{\arraycolsep}{5pt}
\setlength{\extrarowheight}{0pt}
\begin{equation*}
\begin{array}{ccc}
 v_{1,10} = C( q^3 , q^5 )_{6 \times 2}, & v_{1,11} = C( q^3 , q^3 , q^3 , q^5 , q^5 , q^5 )_{6 \times 6}, & v_{22} = C( 1, q^2 )_{2 \times 2},\end{array}
\end{equation*}
\setlength{\arraycolsep}{5pt}
\setlength{\extrarowheight}{0pt}
\begin{equation*}
\begin{array}{ccc}
  v_{23} = C( q^3, q^3, q^3, q^3, q^3, q^3 )_{2 \times 6}, & v_{24} = C( q, q^3, q, q^3, q, q^3 )_{2 \times 6}, & v_{25} = C( q^2, q^4, q^2, q^4, q^2, q^4 )_{2 \times 6}, \end{array}
\end{equation*}
\setlength{\arraycolsep}{5pt}
\setlength{\extrarowheight}{0pt}
\begin{equation*}
\begin{array}{ccc}
   v_{26} = C( q^2, q^4, q^2, q^4, q^2, q^4 )_{2 \times 6}, & v_{27} = C( q^2, q^4, q^2, q^4, q^2, q^4 )_{2 \times 6}, & v_{28} = C( q^4, q^4, q^4, q^4, q^4, q^4 )_{2 \times 6}, \end{array}
\end{equation*}
\setlength{\arraycolsep}{5pt}
\setlength{\extrarowheight}{0pt}
\begin{equation*}
\begin{array}{ccc}
    v_{29} = C( q^3, q^5, q^3, q^5, q^3, q^5 )_{2 \times 6}, & v_{2,10} = C( q^3, q^5 )_{2 \times 2}, & v_{2,11} = C( q^3, q^5, q^3, q^5, q^3, q^5 )_{2 \times 6},\end{array}
\end{equation*}
\setlength{\arraycolsep}{5pt}
\setlength{\extrarowheight}{0pt}
\begin{equation*}
\begin{array}{ccc}
     v_{33} = C( 1, q^2, q^4, q^4, q^4, q^2 )_{6 \times 6}, &  v_{34} = C( q^2, q^4, q^4, q^4, q^4, q^2 )_{6 \times 6}, & v_{35} = C( q, q^3, q^5, q^5, q^5, q^3 )_{6 \times 6},\end{array}
\end{equation*}
\setlength{\arraycolsep}{5pt}
\setlength{\extrarowheight}{0pt}
\begin{equation*}
\begin{array}{ccc}
      v_{36} = C(q^3, q^3, q^3, q^5, q^5, q^3)_{6 \times 6}, & v_{37} = C(q^3, q, q^3, q^5, q^5, q^5)_{6 \times 6}, & v_{38} = C(q, q, q^3 q^5, q^5, q^3)_{6 \times 6},\end{array}
\end{equation*}
\setlength{\arraycolsep}{5pt}
\setlength{\extrarowheight}{0pt}
\begin{equation*}
\begin{array}{ccc}
       v_{39} = C(q^2, q^4, q^6, q^6, q^4, q^2)_{6 \times 6}, & v_{3, 10} =C(q^4, q^4)_{6 \times 2}, & v_{3, 11} = C(q^2, q^2, q^4, q^6, q^6, q^4)_{6 \times 6},\end{array}
\end{equation*}
\setlength{\arraycolsep}{5pt}
\setlength{\extrarowheight}{0pt}
\begin{equation*}
\begin{array}{ccc}
       v_{44}=C(1, q^4, q^2, q^4, q^2, q^4)_{6 \times 6}, & v_{45} = C(q, q^3, q^3, q^5, q^3, q^5)_{6 \times 6}, & v_{46} = C(q, q^5, q, q^5, q^3, q^5)_{6 \times 6},
      \end{array}
\end{equation*}
\begin{equation*}
\begin{array}{ccc}
       v_{47} = C(q^3, q^3, q, q^5, q^3, q^5)_{6 \times 6}, & v_{48} = C(q^3, q^3, q^3, q^5, q^5, q^5)_{6 \times 6}, & v_{49} = C( q^2, q^4, q^4, q^6, q^4, q^4)_{6 \times 6},\end{array}
\end{equation*}
\setlength{\arraycolsep}{5pt}
\setlength{\extrarowheight}{0pt}
\begin{equation*}
\begin{array}{ccc}
        v_{4, 10} = C(q^2, q^6)_{6 \times 2}, & v_{4, 11} = C(q^2, q^4, q^2, q^6, q^4, q^6)_{6 \times 6}, & v_{55} = C(1, q^4, q^4, q^6, q^4, q^4)_{6 \times 6},\end{array}
\end{equation*}
\setlength{\arraycolsep}{5pt}
\setlength{\extrarowheight}{0pt}
\noindent
\begin{equation*}
\begin{array}{ccc}
         v_{56} = C(q^2, q^4, q^2, q^6, q^4, q^4)_{6 \times 6}, & v_{57} = C(q^2, q^2, q^2, q^6, q^4, q^6)_{6 \times 6}, & v_{58} = C(q^2, q^2, q^4, q^6, q^6, q^4)_{6 \times 6}, \end{array}
\end{equation*}
\noindent
\begin{equation*}
\begin{array}{ccc}
         v_{59} = C(q, q^5, q^5, q^7, q^3, q^3)_{6 \times 6}, & v_{5, 10} = C(q^3, q^5)_{6 \times 2}, & v_{5, 11} = C(q, q^3, q^3, q^7, q^5, q^5)_{6 \times 6},\end{array}
\end{equation*}
\setlength{\arraycolsep}{5pt}
\setlength{\extrarowheight}{0pt}
\begin{equation*}
\begin{array}{ccc}
         v_{66} = C(1, q^6, q^2, q^6, q^2, q^6)_{6 \times 6}, &
        v_{67} = C( q^2, q^4, q^2, q^6, q^4, q^4)_{6 \times 6}, & v_{68} = C(q^2, q^4, q^4, q^6, q^4, q^4)_{6 \times 6}, \end{array}
\end{equation*}
\setlength{\arraycolsep}{5pt}
\setlength{\extrarowheight}{0pt}
\begin{equation*}
\begin{array}{ccc}
 v_{69} = C(q^3, q^5, q^5, q^5, q^3, q^3)_{6 \times 6}, &    v_{6, 10} = C(q, q^7)_{6 \times 2}, & v_{6, 11} = C(q, q^5, q^3, q^7, q^3, q^5)_{6 \times 6},\end{array}
\end{equation*}
\setlength{\arraycolsep}{5pt}
\setlength{\extrarowheight}{0pt}
\begin{equation*}
\begin{array}{ccc}
  v_{77} = C(1, q^4, q^4, q^6, q^4, q^4)_{6 \times 6}, &
       v_{78} = C(q^2, q^4, q^6, q^6, q^4, q^2)_{6 \times 6}, & v_{79} = C(q^3, q^7, q^5, q^5, q, q^3)_{6 \times 6},\end{array}
\end{equation*}
\setlength{\arraycolsep}{5pt}
\setlength{\extrarowheight}{0pt}
\begin{equation*}
\begin{array}{ccc}
v_{7, 10} = C(q^3, q^5)_{6 \times 2}, &
       v_{7, 11} = C(q, q^5, q^5, q^7, q^3, q^3)_{6 \times 6}, & v_{88} = C(1, q^2, q^4, q^6, q^4, q^2)_{6 \times 6},\end{array}
\end{equation*}
\setlength{\arraycolsep}{5pt}
\setlength{\extrarowheight}{0pt}
\begin{equation*}
\begin{array}{ccc}
  v_{89} = C(q^3, q^5, q^7, q^5, q^3, q)_{6 \times 6}, &   v_{8, 10} = C(q^3, q^5)_{6 \times 2}, & v_{8, 11} = C(q, q^3, q^5, q^7, q^5, q^3)_{6 \times 6}, \end{array}
\end{equation*}
\setlength{\arraycolsep}{5pt}
\setlength{\extrarowheight}{0pt}
\begin{equation*}
\begin{array}{ccc}
        v_{99} = C(1, q^4, q^4, q^8, q^4, q^4)_{6 \times 6}, & v_{9, 10} = C(q^4, q^4)_{6 \times 2}, & v_{9, 11} = C(q^2, q^2, q^2, q^6, q^6, q^6)_{6 \times 6},\end{array}
\end{equation*}
\setlength{\arraycolsep}{5pt}
\setlength{\extrarowheight}{0pt}
\begin{equation*}
\begin{array}{ccc}
         v_{10, 10} = C(1, q^2)_{2 \times 2}, & v_{10, 11} = C(q^2, q^6, q^2, q^6, q^2, q^6)_{2 \times 6}, & v_{11, 11} = C(1, q^4, q^4, q^8, q^4, q^4)_{6 \times 6} \end{array},
\end{equation*}
where $C_{6\times 2}$ means the $6\times 2$ block of the circulant matrix.

The Smith normal form is
\begin{equation*}
diag\left(1, \ (1-q^2)I_{8}, \ (1-q^2)^2 I_{24}, \ (1-q^2)^3 I_{20}, \ (1-q^2)^2(I-q^8) I_{6} \right).
\end{equation*}
The left transition matrix is $LU$ and the right transition matrix is $U^{t}R$.
\setlength{\arraycolsep}{3pt}
\setlength{\extrarowheight}{2pt}
\begin{flalign}  \label{U}
&U=\left(\begin{array}{c | c  c | c  c | c c  c  c | c  c  c}
1 & 0 & 0 & 0 & 0 & 0 & 0 & 0 & 0 & 0 & 0 & 0 \\
\hline
-q 1 & I_6 & 0 & 0 & 0 & 0 & 0 & 0 & 0 & 0 & 0 & 0 \\ 
-q 1 & 0 & I_2 & 0 & 0 & 0 & 0 & 0 & 0 & 0 & 0 & 0 \\ 
\hline
q^2 1 & -q(I + J^5) & 0 & I_6 & 0 & 0 & 0 & 0 & 0 & 0 & 0 & 0 \\ 
q^2 1 & -qI & -q\bold{I}_2^t & 0 & I_6 & 0 & 0 & 0 & 0 & 0 & 0 & 0 \\ 
\hline
0 & q^2I & 0 & -qI & -qI & I_6 & 0 & 0 & 0 & 0 & 0 & 0 \\
0 & 0 & q^2\bold{I}_2^t & 0 & -q(I+J^4) & 0 & I_6 & 0 & 0 & 0 & 0 & 0 \\
0 & -q^4I+q^2J^4 & 0 & q^3I-qJ^5 & q^3I-qJ^4 & -q^2I & 0 & I_6 & 0 & 0 & 0 & 0 \\
0 & q^2J^5 & -q^4\bold{I}_2^t & -q(I+J^5) & q^3(I+J^4) & 0 & -q^2I & 0 & I_6 & 0 & 0 & 0 \\
\hline
0 & -q^3I & 0 & q^2(I+J) & q^2I & -qI & 0 & -qJ^2 & -qJ & I_6 & 0 & 0 \\
0 & 0 & -q^3I_2 & 0 & q^2\bold{I}_2 & 0 &  -q\bold{I}_2 & 0 & 0 & 0 & I_2 & 0 \\
-q^4 1 & q^3(J^4+J^5) & 0 & -q^2J^5 & q^2I & -qI & -qI & 0 & 0 & 0 & 0 & I_6 
\end{array} \right);
\end{flalign}
\begin{equation*}
\setlength{\arraycolsep}{4pt}
\setlength{\extrarowheight}{-5pt}
\mbox{where }J = \left( \begin{array}{cccccc} 0 & 1 & 0 & 0 & 0 & 0\\ 0 & 0 & 1 & 0 & 0 & 0\\ 0 & 0 & 0 & 1 & 0 & 0\\ 0 & 0 & 0 & 0 & 1 & 0\\ 0 & 0 & 0 & 0 & 0 & 1\\ 1 & 0 & 0 & 0 & 0 & 0\end{array} \right)_{6 \times 6} \
\mbox{ and } \ \bold{I}_2=
\left( \begin{array}{cc| cc|cc}1 & 0 & 1 & 0 & 1 & 0\\
0 & 1 & 0 & 1 & 0 & 1\end{array} \right)_{2 \times 6}.
\end{equation*}
\begin{equation*}
\setlength{\extrarowheight}{-1pt}
\setlength{\arraycolsep}{6pt}
L = \left(\begin{array}{c| c  c | c  c  c}
I_{33} & 0 & 0 & 0 & 0 & 0 \\
\hline
 0 & I_6 & q^2I_6 & 0 & 0 & -qI_6 \\ 
 0 & 0 & I_6 & 0 & 0 & 0 \\ 
\hline
 0 & 0 & 0 & I_6 & 0 & 0 \\ 
  0 & 0 & 0 & 0 & I_2 & 0 \\ 
  0  & -qI_6 & q^5I_6 & 0 & 0& [2]I_6 
\end{array} \right). \
\setlength{\extrarowheight}{0pt}
\setlength{\arraycolsep}{6pt}
R=\left(\begin{array}{c| c  c | c  c  c}
I_{33} & 0 & 0 & 0 & 0 & 0 \\
\hline
 0 & I_6 & -q^4I_6 & 0 & 0 & q^5I_6 \\ 
 0 & 0 & I_6 & 0 & 0 & -qI_6 \\ 
\hline
 0 & 0 & 0 & I_6 & 0 & 0 \\ 
  0 & 0 & 0 & 0 & I_2 & 0 \\ 
  0  & 0 & -qI_6 & 0 & 0 & [2]I_6 
\end{array} \right).
\end{equation*}
where $[2]=1+q^2$.
\subsection{Pyramids}\label{chap-six}
\subsubsection{Square Base (n=4)}
\begin{figure}[ht]
\includegraphics[scale=.15]{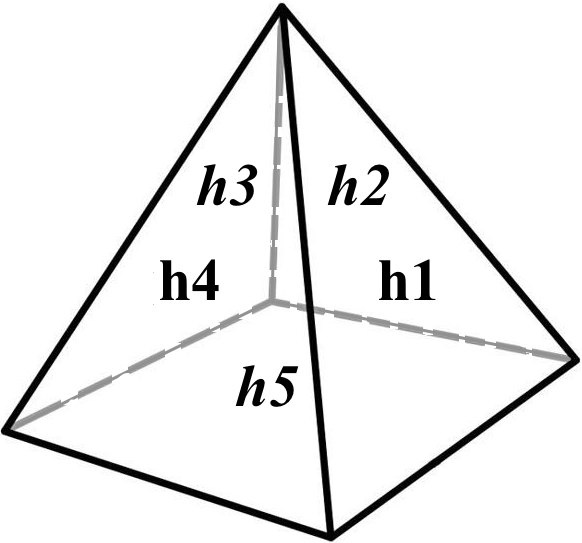}
\caption{Square base pyramid with labelled hyperplanes.  Hyperplanes labelled in italics are not visible from this view.}
\label{fig: pyr n=4}
\end{figure}
Let $\mathcal{A} = \{h_{1}, h_{2}, h_{3}, h_{4}, h_{5}\}$ be a hyperplane arrangement in $\R^3$ of a pyramid with a square base, where $h_{5}$ is the hyperplane that forms the base and the remaining hyperplanes are labelled according to \fref{fig: pyr n=4}.

Here the regions are indexed in the following order by the hyperplanes in their separating sets from $R_{0}$:
\setlength{\arraycolsep}{5pt}
\setlength{\extrarowheight}{0pt}
\begin{equation*}
\begin{array}{c|cccc|cccc|cccc|c} 5 & 15 & 25 & 35 & 45 & 125 & 235 & 345 & 145 & 1235 & 2345 & 1345 & 1245 & 12345\\
\hline
0 & 1 & 2 & 3 & 4 & 12 & 23 & 34 & 41 & 123 & 234 & 341 & 412 & 1234\end{array}
\end{equation*}
The Smith normal form of the $q$-Varchenko matrix for this arrangement is
\begin{equation*}
diag\left(1, \ (1-q^2)I_{5}, \ (1-q^2)^2 I_{10}, \ (1-q^2)^3 I_{6}, \ (1-q^2)^2(1-q^8)\right),
\end{equation*}
with the left transition matrix $WLTP$ and the right transition matrix $(TP)^{t}RW^{t}$.
\begin{equation*} \setlength{\arraycolsep}{4pt}\setlength{\extrarowheight}{-2pt}
P=\left( \begin{array}{ccc} I_{9} & -qI_{9} & 0\\
0 & I_{9} & 0\\
0 & 0 & I_{5}\end{array} \right).
\end{equation*}
\begin{equation*}
P \ V_{q}(\mathcal{A}) \ P^{t} = \left( \begin{array}{cc} (1-q^2)V_{q}(\mathcal{A}^{h_{5}}) & 0 \\ 0 & V_{q}(\mathcal{A}-\{h_{5}\})\end{array} \right) \mbox{ where } V_{q}(\mathcal{A}^{h_{5}}) = V_{q} \mbox{ for the $C_{4}$ arrangement.}
\end{equation*}
\begin{equation*}\setlength{\arraycolsep}{6pt}
\setlength{\extrarowheight}{-2.5pt}
V_{q}(\mathcal{A}-\{h_{5}\}) =
\left( \begin{array}{c|cccc|cccc|cccc|c}
1 & q & q & q & q & q^2 & q^2 & q^2 & q^2 & q^3 & q^3 & q^3 & q^3 & q^4\\
\hline
q & 1 & q^2 & q^2 & q^2 & q & q^3 & q^3 & q & q^2 & q^4 & q^2 & q^2 & q^3\\
q &  q^2 & 1 & q^2 & q^2 & q & q & q^3 & q^3 & q^2 & q^2 & q^4 & q^2 & q^3\\
q &  q^2 &  q^2 & 1 & q^2 & q^3 & q & q & q^3 & q^2 & q^2 & q^2 & q^4 & q^3\\
q &  q^2 &  q^2  & q^2 & 1 & q^3 & q^3 & q & q & q^4 & q^2 & q^2 & q^2 & q^3\\
\hline
q^2 & q & q & q^3 & q^3 & 1 & q^2 & q^4 & q^2 & q & q^3 & q^3 & q & q^2\\
q^2 & q^3 & q & q & q^3 &  q^2 &1 &  q^2 & q^4 & q & q & q^3 & q^3 & q^2\\
q^2 & q^3 & q^3 & q & q &  q^4  &  q^2 & 1 & q^2 & q^3 & q & q & q^3 & q^2\\
q^2 & q & q^3 & q^3 & q &  q^2  &  q^4 & q^2 & 1 & q^3 & q^3 & q & q & q^2\\
\hline
q^3 & q^2 & q^2 & q^2 & q^4 & q & q & q^3 & q^3 & 1 & q^2 & q^2 & q^2 & q\\
q^3 & q^4 & q^2 & q^2 & q^2 & q^3 & q & q & q^3 & q^2 & 1 & q^2 & q^2 & q\\
q^3 & q^2 & q^4 & q^2 & q^2 & q^3 & q^3 & q & q & q^2 & q^2 & 1 & q^2 & q\\
q^3 & q^2 & q^2 & q^4 & q^2 & q & q^3 & q^3 & q & q^2 & q^2 & q^2 & 1 & q\\
\hline
q^4 & q^3 & q^3 & q^3 & q^3 & q^2 & q^2 & q^2 & q^2 & q & q & q & q & 1
\end{array} \right).
\end{equation*}

Here the regions are indexed in the following order by the hyperplanes in their separating sets from $R_{0}$:
\setlength{\arraycolsep}{5pt}
\setlength{\extrarowheight}{-2pt}
\begin{equation*}
\begin{array}{c|cccc|cccc|cccc|c} 0 & 1 & 2 & 3 & 4 & 12 & 23 & 34 & 41 & 123 & 234 & 341 & 412 & 1234\end{array}
\end{equation*}
\setlength{\arraycolsep}{3pt}
\setlength{\extrarowheight}{-1pt}
\begin{flalign*}
&T = \left( \begin{array}{c|c|c|c} 1 & 0 & 0& 0\\
\hline
-q1 & I_{4} & 0& 0\\
\hline
q^21 & -q(I+J) & I_{4} & 0\\
\hline
0 & 0 & 0 & U_2 U_1 \end{array} \right), \mbox{ where }
\setlength{\arraycolsep}{4pt}
\setlength{\extrarowheight}{-1pt}
U_{1} =
 \left( \begin{array}{c|c|c|c|c}
1 & 0 & 0 & 0 & 0 \\
\hline
-q1 & I_{4} & 0 & 0 & 0\\
\hline
q^21 & -q(I+J) & I_{4}   & 0 & 0\\
\hline
0 & q^2J & -q(I+J) & I_{4} & 0\\
\hline
-q^4 & u_{41} & u_{42} & u_{43} & 1\end{array} \right),
\end{flalign*}
\begin{flalign*}
&  u_{41} = \left( \begin{array}{cccc} q^3 & 0 & 0 & q^3 \end{array} \right), \ u_{42} = \left( \begin{array}{cccc} 0 & q^2 & 0 & -q^2 \end{array} \right), \ u_{43} = \left( \begin{array}{cccc} -q & -q & 0 & 0 \end{array} \right);
\end{flalign*}
\begin{flalign*}&U_{2} = \left( \begin{array}{c|c|c|cc|c}
1 & 0 & 0 & 0 & 0 & 0\\
\hline
0 & I_{4} & 0 & 0 & 0 & 0\\
\hline
0 & 0 & I_{4}  & 0  & 0 & 0\\
\hline
0 & 0 & 0 & I_{2} & 0 & 0\\
0 & 0 & 0 & -q^2I_{2} & I_{2} & 0\\
\hline
0 & 0 & 0 & 0 & 0 & 1\end{array} \right). \
\end{flalign*}
\begin{equation*}
L = \left( \begin{array}{c|ccc} I_{20} & 0 & 0 & 0 \\
\hline
0 & 1 & q^2 & -q\\
0 & 0 & 1 & 0\\
0 & -q & q^5 & 1+q^2 \end{array} \right). \
R=  \left( \begin{array}{c|ccc} I_{20} & 0 & 0 & 0\\
\hline
0 & 1 & -q^4 & q^5\\
0 & 0 & 1 & -q\\
0 & 0 & -q & 1+q^2\end{array}\right). \
 \setlength{\extrarowheight}{-2pt}
\setlength{\arraycolsep}{6pt}
W= \left( \begin{array}{c|c|c|c|c|c|c|c}
 0 & 0 & 0 & 1 & 0 & 0 & 0 & 0\\
\hline
 1 & 0 & 0 & 0 & 0 & 0 & 0 & 0\\
\hline
 0 & 0 & 0 & 0 & I_{4} & 0 & 0 & 0\\
\hline
 0 & I_{4} & 0 & 0 & 0 & 0 & 0 & 0\\
\hline
 0 & 0 & 0 & 0 & 0 & I_{6} & 0 & 0\\
\hline
 0 & 0 & I_{4} & 0 & 0 & 0 & 0 & 0\\
\hline
 0 & 0 & 0 & 0 & 0 & 0 & I_{2} & 0\\
\hline
 0 & 0 & 0 & 0 & 0 & 0 & 0 & 1 \end{array} \right).
 \end{equation*}
\subsubsection{Pentagonal Base (n=5)}
\begin{figure}[ht]
\includegraphics[scale=.5,trim={0in 0.0in 0in 0in},clip]{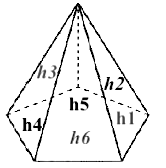}
\caption{Pentagonal base pyramid with labelled hyperplanes.  Hyperplanes labelled in italics are not visible from this view.}
\label{pyr-5}
\end{figure}
 \setlength{\extrarowheight}{-2pt}
\setlength{\arraycolsep}{4pt}
$\mathcal{A} = \{h_{1}, h_{2}, h_{3}, h_{4}, h_{5}, h_{6}\}$ is an arrangement in $\R^3$ of a pyramid with a regular pentagonal base where $h_{6}$ forms the base and the remaining hyperplanes are labelled as in \fref{pyr-5}.

%
The Smith normal form is
\begin{equation*}
diag\left( 1, \ (1-q^2)I_{6}, \ (1-q^2)^2 I_{15}, \ (1-q^2)^3I_{8}, \ (1-q^2)^2 (1-q^{10})I_{3}\right).
\end{equation*}
with the left transition matrix $WLTP$ and the right transition matrix $(TP)^{t}RW^{t}$.

Here the regions are indexed in the following order by the hyperplanes in their separating sets from $R_{0}$:
\begin{eqnarray*}
&\begin{array}{c|ccccc|ccccc} 6 & 16 & 26 & 36 & 46 & 56 & 126 & 236 & 346 & 456 & 156 \\
 \hline
0 & 1 & 2 & 3 & 4 & 5 & 12 & 23 & 34 & 45 & 15 \end{array}&\\
&\begin{array}{|ccccc| ccccc|c}
1236 & 2346 & 3456 & 1456 & 1256  & 12346 & 23456 & 13456 & 12456 & 12356 &   123456\\
\hline
 123 & 234 & 345 & 145 & 125 & 1234 & 2345 & 1345 & 1245 & 1235 & 12345
\end{array}&\\
\setlength{\extrarowheight}{-3pt}
&P=\left( \begin{array}{ccc} I_{16} & -qI_{16} & 0\\
0 & I_{16} & 0\\
0 & 0 & I_{6}\end{array} \right).&\\
%
&P \ V_{q}(\mathcal{A}) \ P^{t} = \left( \begin{array}{cc} (1-q^2)V_{q}(\mathcal{A}^{h_{6}}) & 0 \\ 0 & V_{q}(\mathcal{A}-\{h_{6}\})\end{array} \right) \mbox{ where }V_{q}(\mathcal{A}^{h_{6}}) = V_{q} \mbox{ for the $C_{5}$ arrangement.}&
\end{eqnarray*}
%
\begin{equation*}
V_{q}(\mathcal{A}-\{h_{6}\}) =
\left( \begin{array}{c c c c c c}
1 & Q & Q_{2} & Q_{3} & Q_{4} & q^{5}\\
Q^{t} & v_{11} & v_{12} & v_{13} & v_{14} & Q^{t}_{4}\\
Q^{t}_{2} & v_{21} & v_{22} & v_{23} & v_{24} & Q^{t}_{3}\\
Q^{t}_{3} & v_{31} & v_{32} & v_{33} & v_{34} & Q^{t}_{2}\\
Q^{t}_{4} & v_{41} & v_{42} & v_{43} & v_{44} & Q^{t}\\
q^{5} & Q_{4} & Q_{3} & Q_{2} & Q & 1
\end{array} \right)
\end{equation*}
where $v_{j i} = v_{ij}^t$ is a row-circulant matrix and $Q_{k} = (q^{k}, q^{k}, q^{k}, q^{k}, q^{k}) = \bq^{k}_{5}$.
\setlength{\arraycolsep}{5pt}
\setlength{\extrarowheight}{0pt}
\begin{equation*}
\begin{array}{ccc}
        v_{11} = v_{44} = C(1, q^{2}, q^{2}, q^{2}, q^{2} )_{5 \times 5}, &  v_{12} = v_{34} = C(q, q^{3}, q^{3}, q^{3}, q)_{5 \times 5}, & v_{13} = v_{24} = C(q^{2}, q^{4}, q^{4}, q^{2}, q^{2})_{5 \times 5},\end{array}
\end{equation*}
\setlength{\arraycolsep}{5pt}
\setlength{\extrarowheight}{0pt}
\begin{equation*}
\begin{array}{ccc}
        v_{14} = C( q^{3}, q^{5}, q^{3}, q^{3}, q^{3})_{5 \times 5}, & v_{22} = v_{33} = C( 1, q^{2}, q^{4}, q^{4} q^{2})_{5 \times 5}, & v_{23} = C( q, q^{3}, q^{5}, q^{3}, q)_{5 \times 5}\end{array}
\end{equation*}
Here the regions are indexed in the following order by the hyperplanes in their separating sets from $R_{0}$:
\setlength{\arraycolsep}{3pt}
\begin{equation*}
\begin{array}{c|ccccc|ccccc|ccccc| ccccc|c} 0 & 1 & 2 & 3 & 4 & 5 & 12 & 23 & 34 & 45 & 15 & 123 & 234 & 345 & 145 & 125 & 1234 & 2345 & 1345 & 1245 & 1235 & 12345 \end{array}
\end{equation*}
\begin{flalign*}
 &T = \left( \begin{array}{c|c|c|c|c} 1 & 0 & 0& 0 & 0\\
\hline
-q & I_{5} & 0& 0 &0\\
\hline
q^2 & -q(I+J) & I_{5} & 0 & 0\\
\hline
0 & q^2J^2 & -q(I+J) & I_{5}  & 0\\
\hline
0 & 0 & 0 & 0 & U_2U_1 \end{array} \right),\\
&U_{1} = \left( \begin{array}{c| c | c| c| c| c}
1 & 0 & 0 & 0 & 0 & 0\\
\hline
-q & I_{5} & 0 & 0 & 0 & 0\\
\hline
q^2 & -q(I+J) & I_{5} & 0 & 0 & 0\\
\hline
0 & q^2J & -q(I+J) & I_{5} & 0 & 0\\
\hline
0 & -q^5J^{t} & q^2J +q^4(J^{3} + J^{t}) & -q(I+J) -q^3J^{3} & I_{5} & 0\\
\hline
-q^5 & u_{51} & u_{52} & u_{53} & u_{54} & 1\end{array} \right); \\
&u_{51} = \left( \begin{array}{ccccc} q^4 & 0 & 0 & 0 & q^4 \end{array} \right),
 u_{52} = \left( \begin{array}{ccccc} 0 & 0 & 0 & 0 & -q^3 \end{array} \right), \ u_{53} = \left( \begin{array}{ccccc} 0 & q^2 & 0 & 0 & 0 \end{array} \right), \mbox{ and } \ u_{54} = \left( \begin{array}{ccccc} -q & -q & 0 & 0 & 0 \end{array} \right);
\end{flalign*}
 \begin{flalign*}
&\mbox{and } U_{2} =
\left( \begin{array}{c|c|c|c|c|c}
1 & 0 & 0 & 0 & 0 & 0\\
\hline
0 & I_{5} & 0 & 0 & 0 & 0\\
\hline
0 & 0 & I_{5} & 0 & 0 & 0\\
\hline
0 & 0 & 0 & I_{5} & 0 & 0\\
\hline
0 & 0 & 0 & 0 & I_{5} & 0\\
\hline
0 & 0 & 0 & u & 0 & 1
\end{array}\right); \ u = \left( \begin{array}{ccccc} 0 & 0 & 0 & -q^2(1-q^2) & -q^2(1-q^2) \end{array} \right).
 \end{flalign*}
 \noindent
 \setlength{\arraycolsep}{4.5pt}
\setlength{\extrarowheight}{0pt}
\begin{equation*} L = \left( \begin{array}{cc} I_{32} & 0\\ 0 & L_0 \end{array} \right), \mbox{ where }
\end{equation*}
\begin{equation*}
L_0=\left( \begin{array}{cccccc}
1 & 0 & -1 & -q^2 & 0 & 0\\
 0 & 1 & 0 & 0 & -(1+q^2) & 0\\
 0 & q^2 & 1 & -(1+q^4-q^8) & -q^2(1+q^2) & -q(1-q^4)\\
 0 & -q^3 & -q & - q(1-q^2-q^4+q^8) & q^3(1+q^2) & 1+q^2- q^6\\
 0 & -q^2(1+q^2) & -q^2 & q^2(1-q^6-q^8) & 1+ 2q^2+2q^4 + q^6 & -q^5(1+q^2)\\
 -q^2 & -2q^2 & 0 & 1+q^2-q^8 & 1+q^2+q^4 & -q(1+q^4)
\end{array}\right).
\end{equation*}
\begin{equation*}
\setlength{\extrarowheight}{0pt}
 R =  \left( \begin{array}{cc} I_{32} & 0\\ 0 & R_0 \end{array} \right),
\end{equation*}
where $R_0 =$
\begin{equation*}
\left( \begin{array}{cccccc}
 1 & q^4 & 1+q^4+q^6 + q^8 & q(2+2q^4+3q^6 + q^8 + 2q^{10} + 2q^{12} + q^{14}) & -(1-2q^2-2q^8-q^{10}) & 1-q^2+q^6\\
 0 & 1 & q^2(1+q^2) & q^3(2+q^2+q^4+2q^6+q^8) & 1+ q^2+q^4 +q^6  & -q^2(1+q^2)\\
 0 & 0 & 1 & q(1+q^6) & -(1-q^4) & 1+q^2 \\
 0 & 0 & 0 & 0 & -1 & 1+q^2\\
 0 & 0 & 0 & -q(1-q^2) & 1 - q^2 & q^2\\
 0 & 0 & 0 & 1 & q & -q(2+q^2)
 \end{array} \right).
\end{equation*}
\begin{equation*}
\setlength{\extrarowheight}{-2pt}
\setlength{\arraycolsep}{6pt}
W =\left( \begin{array}{c| c |c | c |c |c |c | c }
0 & 0 & 0 & 1 & 0 & 0 & 0 & 0\\
\hline
1 & 0 & 0 & 0 & 0 & 0 & 0 & 0\\
\hline
0 & 0 & 0 & 0 & I_{5} & 0 & 0 & 0\\
\hline
0 & I_{5} & 0 & 0 & 0 & 0 & 0 & 0\\
\hline
0 & 0 & 0 & 0 & 0 & I_{10} & 0 & 0\\
\hline
0 & 0 & I_{5} & 0 & 0 & 0 & 0 & 0\\
\hline
0 & 0 & 0 & 0 & 0 &  0 &  I_{3} & 0\\
\hline
0 & 0 & 0 & 0 & 0 &  0 &  0 & I_{3}\end{array}\right).
\end{equation*}

\section{Acknowledgements}
NJ is supported by Simons Foundation grant no. 523868. 
KCM is supported by Simons Foundation Grant no. 636482.



\end{document}